\numberwithin{equation}{section}
\newtheorem{theoremcounter}{theoremcounter}[section]
\newtheorem{conjecture}[theoremcounter]{Conjecture}
\newtheorem{corollary}[theoremcounter]{Corollary}
\newtheorem{definition}[theoremcounter]{Definition}
\newtheorem{example}[theoremcounter]{Example}
\newtheorem{proposition}[theoremcounter]{Proposition}
\newtheorem{theorem}[theoremcounter]{Theorem}
\newcommand{\tit}{\itshape}
\renewcommand{\frak}{\ensuremath{\mathfrak}}
\newcommand{\cal}{\ensuremath{\mathcal}}
\newcommand{\frakc}{\ensuremath{\frak{c}}}
\newcommand{\frake}{\ensuremath{\frak{e}}}
\newcommand{\cC}{\ensuremath{\cal{C}}}
\newcommand{\cE}{\ensuremath{\cal{E}}}
\newcommand{\cZ}{\ensuremath{\cal{Z}}}
\newcommand{\rmE}{\ensuremath{\mathrm{E}}}
\newcommand{\rmF}{\ensuremath{\mathrm{F}}}
\newcommand{\rmJ}{\ensuremath{\mathrm{J}}}
\newcommand{\rmM}{\ensuremath{\mathrm{M}}}
\newcommand{\amid}{\ensuremath{\mathop{\mid}}}
\newcommand{\ZZ}{\ensuremath{\mathbb{Z}}}
\newcommand{\QQ}{\ensuremath{\mathbb{Q}}}
\newcommand{\RR}{\ensuremath{\mathbb{R}}}
\newcommand{\CC}{\ensuremath{\mathbb{C}}}
\renewcommand{\Im}{\ensuremath{\mathop{\mathfrak{Im}}}}
\newcommand{\isdiv}{\amid}
\newcommand{\Mat}[2]{\ensuremath{\mathrm{Mat}_{#1}(#2)}}
\newcommand{\SL}[1]{\ensuremath{\mathrm{SL}_{#1}}}
\newcommand{\Sp}[1]{\ensuremath{\mathrm{Sp}_{#1}}}
\newcommand{\T}{\ensuremath{\mathrm{T}}}
\newcommand{\slashdiv}{\ensuremath{\mathop{/}}}
\newcommand{\HS}{\mathbb{H}}
\newcommand{\td}{\tilde}
\renewcommand{\gcd}{\ensuremath{\mathrm{gcd}}}
\renewcommand{\pmod}[1]{\ensuremath{\;({\rm mod}\;#1)}}
\begin{document}

\title{$M_{24}$-twisted Product Expansions are Siegel Modular Forms}

\author{Martin Raum}
\address{Max Planck Institut f\"ur Mathematik\\Vivatsgasse 7\\53111 Bonn, Germany}
\email{MRaum@mpim-bonn.mpg.de}
%\urladdr{http://people.mpim-bonn.mpg.de/mraum/en}

\curraddr{ETH, Dept. Mathematics, Rämistraße 101, CH-8092, Zürich, Switzerland}
\thanks{While this paper was written the author was holding a scholarship from the Max Planck society.  The author is currently supported by the ETH Zurich Postdoctoral Fellowship Program and by the Marie Curie Actions for People COFUND Program.}
%\thanks{This paper is in final form and no version of it will be submitted for
%publication elsewhere.}

\subjclass[2000]{Primary 11F46, 81T30; Secondary 11-04} %
\keywords{Mathieu Moonshine, twisted elliptic genera of $K3$ surfaces, Siegel modular forms, product expansions, Borcherds products}%
%\dedicatory{Dedicated to Professor XY on the occasion of his seventieth birthday.}

% \date{\today}

\begin{abstract}
Cheng constructed product expansions from twists of elliptic genera of symmetric powers of $K3$ surfaces that are related to $M_{24}$ moonshine.  We study which of them are Siegel modular forms.  If the predicted level is non-composite, they are modular, and their powers can be represented as products of rescaled Borcherds products.
\end{abstract}

\maketitle

\section{Introduction}
\label{sec:introduction}

In~\cite{CN79}, Conway and Norton described a phenomenon that later became famous as ``Monstrous Moonshine'' and was completely resolved by Borcherds~\cite{Bo92} after parts had been proven in~\cite{FLM88}.  He showed that (twisted) McKay-Thompson series of the ``Monster Algebra'' are modular forms.  Monstrous moonshine connects the realm of operator algebras, more precisely vertex operator algebras~\cite{LL04}, with the world of modular forms~\cite{BGHZ08}.  This connection was stimulating to both.  In~\cite{EOT10}, Eguchi, Ooguri, and Tachikawa discovered another moonshine phenomenon originating in elliptic genera of $K3$, which they linked to the Mathieu group~$M_{24}$.  They conjectured that it gives rise to mock modular forms.  In this paper, we address a question on modularity that arises in this context. 

Mock modular forms generalize holomorphic elliptic modular forms~\cite{Zw02, BF04, Za06, On09}, and have been successfully applied, e.g., in physics and combinatorics~\cite{BO06, Ma10}.  The findings of Eguchi, Ooguri, and Tachikawa relate the first few Fourier coefficients of a mock modular form arising naturally from the elliptic genus of $K3$ to sums of dimensions of representations of~$M_{24}$.  Motivated by this observation, they conjectured that all Fourier coefficients are decomposable in such a way.  This conjecture attracted much interest in physics and mathematics; See, for example,~\cite{GV12}.  In the mean time, the initial observation made in~\cite{EOT10} has been confirmed by Gannon~\cite{Ga12}.

It is commonly believed that the above modularity results come from a vertex operators algebra which carries an $M_{24}$ action.  While the questions on decompositions of Fourier coefficients of mock modular forms posed in~\cite{EOT10} were answered in the affirmative, the potential connection to vertex operators algebras, and thus string theory, remains to be examined.

Cheng suggested to construct $M_{24}$ twisted elliptic genera for symmetric powers of $K3$ surfaces~\cite{Che10}, which must be modular if a vertex operator algebra as above exists.  She gave explicit product expansions $\Phi_g$ attached to conjugacy classes $g$ of $M_{24}$, and conjectured they are Siegel modular forms of degree~$2$.  They are, she argued, related to the $1/4$-BPS spectrum of the $\text{K}3 \times T^2$-compactified type II string theory~\cite{Che10}.  In contrast to the expansions given in~\cite{EH12}, they are, however, not modular by construction.

We briefly set up notation to state Conjecture~\ref{conj:cheng-duncan} and Theorem~\ref{maintheorem}.  Conjugacy classes (or pairs of conjugacy classes) of $M_{24}$ that appear are labeled according to ATLAS~\cite{Co85} by
\begin{gather}
\label{eq:M24-conjugacy-classes}
\begin{split}
&
  1\text{A}, 2\text{A}, 2\text{B}, 3\text{A}, 3\text{B},
  4\text{A}, 4\text{B}, 4\text{C}, 5\text{A}, 6\text{A}, 6\text{B},
\\
&
  7\text{AB}, 8\text{A}, 10\text{A}, 11\text{A}, 12\text{A}, 12\text{B},
  14\text{AB}, 15\text{AB}, 21\text{AB}, 23\text{AB}
\text{.}
\end{split}
\end{gather}
Siegel modular forms generalize the notion of elliptic modular forms, and appear, for example, in applications to moduli problems~\cite{Ig67} and state counting in the theory of quantum black holes~\cite{DMZ12}.  We define Siegel modular forms of degree~$2$:  Let
\begin{gather*}
  \HS_2
=
   \{ Z \in \Mat{2}{\CC} \,:\, Z^\T = Z,\, \Im(Z) \text{ positive definite}\}
\end{gather*}
be the Siegel upper half space of degree~$2$. A meromorphic Siegel modular form (of degree~$2$) of weight~$k \in \ZZ$ for a finite index subgroup $\Gamma \subseteq \Sp{2}(\ZZ)$ and a character $\chi$ of $\Gamma$ is a meromorphic function $\Phi :\, \HS_2 \rightarrow \CC$ that satisfies
\begin{gather*}
  \Phi\big( (A Z + B) (C Z + D)^{-1} \big)
=
  \det(C Z + D)^k
  \chi(\left(\begin{smallmatrix} A & B \\ C & D \end{smallmatrix}\right))\,
  \Phi(Z)
\end{gather*}
for all $\left(\begin{smallmatrix} A & B \\ C & D \end{smallmatrix}\right) \in \Gamma$.  The subgroups $\Gamma_0^{(2)}(N)$ ($0 < N \in \ZZ$) of all matrices in $\Sp{2}(\ZZ)$ with $C \in N \cdot \Mat{2}{\ZZ}$ play a major role in the present work.  We say a Siegel modular form has level~$N$ if it is a Siegel modular form for $\Gamma^{(2)}_0(N)$ and some character.

In~\cite{CD12}, Cheng and Duncan discussed more thoroughly the twisted elliptic genera that appeared in~\cite{Che10}.  They made the following precise conjecture.
\begin{conjecture}[{Cheng, Duncan~\cite{CD12}}]
\label{conj:cheng-duncan}
For all conjugacy classes $g$ that are given in~\eqref{eq:M24-conjugacy-classes}, the product~$\Phi_g$ (defined in~\eqref{eq:chengducan_productexpansion}) is a Siegel modular form of level~$N_g$, where $N_g$ is given in Table~\ref{tab:Ng}.
\end{conjecture}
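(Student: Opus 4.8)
The plan is to reduce the conjecture to a finite list of identities among Fourier coefficients of weight-$2$ modular forms on $\Gamma_0(N_g)$. Concretely, I will represent a suitable integral power $\Phi_g^{m_g}$ as a finite product of \emph{rescaled} Borcherds products — which makes that power a meromorphic Siegel modular form of level $N_g$ outright — and then descend from $\Phi_g^{m_g}$ to $\Phi_g$. Recall that the $g$-twisted elliptic genus $\phi_g$ of $K3$ is a weak Jacobi form of weight~$0$ and index~$1$ for $\Gamma_0(N_g)$ (with a character if necessary) and decomposes as $\phi_g = \alpha_g\,\phi_{0,1} + \beta_g\,\phi_{-2,1}$, with $\phi_{0,1},\phi_{-2,1}$ the two standard generators of the ring of weak Jacobi forms, $\alpha_g\in\QQ$ a constant, and $\beta_g$ a weight-$2$ modular form for $\Gamma_0(N_g)$ (again possibly with a character). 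By its definition~\eqref{eq:chengducan_productexpansion}, $\Phi_g$ is a second-quantized (Dijkgraaf--Moore--Verlinde--Verlinde type) infinite product $\Phi_g(Z) = p q y \prod (1 - q^n y^\ell p^m)^{e_g(n,\ell,m)}$ whose exponents are built from the Fourier coefficients of the $\phi_{g^d}$ by a Hecke-like sum over the common divisors $d$ of $(n,\ell,m)$. The first step is to disentangle this sum: regrouping by $d$ (and using that $g^d$ depends only on $d \bmod \mathrm{ord}(g)$) writes $\Phi_g$, near the cusp where everything converges, as a finite product $\prod_{d} B_d(V_d Z)^{r_d}$, where $B_d$ is the infinite product attached to the single Jacobi form $\phi_{g^d}$, $V_d$ is a scaling operator (the genus-$2$ analogue of $f(\tau)\mapsto f(d\tau)$, which raises the level by a factor of $d$), and the $r_d\in\QQ$ are in general non-integral; the exponent $m_g$ is chosen to clear their denominators.

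The second step identifies each $B_d$ as a Borcherds (exponential) lift and hence as a genuine meromorphic Siegel modular form. Because $\phi_{g^d}$ has level $N_g/d$ rather than full level, one first passes to the associated vector-valued modular form for the Weil representation of a lattice of signature $(2,3)$ with discriminant governed by $N_g/d$, and then applies Borcherds' product theorem (in its genus-$2$, Gritsenko--Nikulin incarnation) to get a meromorphic Siegel modular form $B_d$ of weight $\tfrac12 c_{g^d}(0,0)$ and level dividing $N_g/d$, with explicitly computable divisor and multiplier; here $c_{g^d}(0,0)$ is the constant Fourier coefficient of $\phi_{g^d}$, and one must check that the obstruction to the existence of this lift vanishes. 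Scaling by $d$ then gives a meromorphic Siegel modular form $B_d(V_d Z)$ of level dividing $N_g$, so the finite product $\prod_d B_d(V_d Z)^{m_g r_d}$ is a meromorphic Siegel modular form for $\Gamma_0^{(2)}(N_g)$. It remains to identify this product with $\Phi_g^{m_g}$, which one does by matching logarithmic derivatives — equivalently, by comparing the exponents in the two product expansions near the cusp.

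This comparison is the heart of the matter, and it is here that the hypothesis that $N_g$ be non-composite is used. Feeding in $\phi_g = \alpha_g\phi_{0,1} + \beta_g\phi_{-2,1}$ and the known $q$-expansions of $\phi_{0,1}$ and $\phi_{-2,1}$, the required equality of exponents collapses to an identity purely among the Fourier coefficients of the weight-$2$ forms $\beta_{g^d}$ on $\Gamma_0(N_g)$. When $N_g$ is $1$ or prime this is tractable: there are only two divisors, $M_2(\Gamma_0(N_g))$ is small, and the finitely many relations one must verify can be read off from the explicit eta-product or Hauptmodul descriptions of the twisted genera. For composite $N_g$, by contrast, the divisor set is larger, $M_2(\Gamma_0(N_g))$ acquires oldforms and new cusp forms that obstruct the naive identity, Atkin--Lehner symmetry enlarges the group one actually controls, and the divisor-indexed building blocks $B_d$ no longer span enough; closing this gap is the main obstacle, and it is what confines the unconditional statement to non-composite level.

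The final step is the descent. Once $\Phi_g^{m_g}$ is known to equal the product of rescaled Borcherds products, it is a meromorphic Siegel modular form of weight $m_g k$ for $\Gamma_0^{(2)}(N_g)$ with some multiplier $\chi_0$, and the same identity continues $\Phi_g$ meromorphically from the cusp to all of $\HS_2$. For $\gamma=\left(\begin{smallmatrix}A & B\\ C & D\end{smallmatrix}\right)\in\Gamma_0^{(2)}(N_g)$ the function $\psi_\gamma(Z):=\Phi_g(\gamma Z)\,\det(CZ+D)^{-k}\,\Phi_g(Z)^{-1}$ is meromorphic on the connected domain $\HS_2$ and satisfies $\psi_\gamma^{m_g}\equiv\chi_0(\gamma)$, a nonzero constant; hence $\psi_\gamma$ has neither zeros nor poles, has vanishing logarithmic derivative, and is therefore a constant $\chi_g(\gamma)\in\mu_{m_g}$. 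The cocycle relation makes $\gamma\mapsto\chi_g(\gamma)$ a character, so $\Phi_g$ is a meromorphic — and, by inspection of its product expansion, holomorphic — Siegel modular form of level $N_g$. This proves the conjecture whenever $N_g$ is non-composite; to remove the restriction one would have to enlarge the supply of building blocks beyond the divisor-indexed rescalings — for instance by admitting Atkin--Lehner twists of the $B_d$ — so as to solve the coefficient identity in the composite case as well.
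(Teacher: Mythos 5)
Your overall architecture — express a suitable power $\Phi_g^{m_g}$ as a finite product of rescaled Borcherds (Cl\'ery--Gritsenko) lifts, conclude that this power is a meromorphic Siegel modular form for $\Gamma_0^{(2)}(N_g)$, and then descend to $\Phi_g$ itself by observing that $\psi_\gamma(Z)=\Phi_g(\gamma Z)\det(CZ+D)^{-k}\Phi_g(Z)^{-1}$ is an $m_g$-th root of a constant and hence constant — is exactly the paper's strategy (Proposition~\ref{prop:modularity-criterion}, Theorem~\ref{thm:phig_modularity}, Corollary~\ref{cor:phig_modularity}), and your descent step is sound. The gap is in your first and central step. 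Regrouping the DMVV-type exponent sum by divisors does \emph{not} write $\Phi_g$ as $\prod_d B_d(V_dZ)^{r_d}$ with $B_d$ the Borcherds lift of the single twisted genus $\phi_{g^d}$: the regrouping only yields the naive products with exponents $c_{g,d}$ (the M\"obius combinations in~\eqref{eq:chengducan_productexpansion}), and for level $N>1$ the actual lift $B_N[\phi]$ of Theorem~\ref{thm:borcherds-products} is \emph{not} that naive product — it necessarily carries, for every cusp $\frakc\in\cC(N)$, a factor in the rescaled variables $(q_1^n\zeta^rq_2^m)^{N_\frakc}$ with exponents $h_\frakc N_\frakc^{-1}c(\phi_\frakc;\cdot)$ built from the Fourier expansion of $\phi$ at $\frakc$. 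So "matching logarithmic derivatives near the cusp" is not a formality: one must exhibit \emph{some} collection of Jacobi forms $\phi_{g,i}$ of various levels $N_{g,i}$ and rescalings $n_{g,i}$ whose combined cusp contributions reproduce $\cE(\Phi_g)(d)$ for every $d$, and these inputs are in general not the twisted genera at all (see Table~\ref{tab:borcherds_products}: level-one constants for $2$B, $3$B, $4$A, $4$C, $8$A, an $n=2$ factor for $4$C and $8$A, and the forced powers $p_g\in\{2,3,8\}$, which arise from non-integrality of the cusp data, not from clearing denominators of a formal regrouping). Establishing the existence of such a decomposition is the actual content of the theorem; in the paper it is a nontrivial linear-algebra problem over $\rmM_0\times\rmM_2$ solved with the cusp-expansion matrices $\Pi_{\rm FE}$ (Table~\ref{tab:projected-cusp-expansions}) and computer search (Appendix~\ref{appendix:how-we-found-the-data}), and it demonstrably resists the naive ansatz already at the composite level $N_{6\text{A}}=6$.

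Your proposal never carries out this verification ("can be read off from the eta-product or Hauptmodul descriptions" is an assertion, not an argument), and its scope is also too narrow: you reduce to "$N_g$ equal to $1$ or a prime, hence two divisors", whereas the cases actually covered include the prime powers $N_g\in\{4,8,9,16\}$ (classes $2$B, $3$B, $4$A, $4$C, $8$A), where there are more divisors and more cusps and the two-divisor shortcut does not apply. Two smaller inaccuracies: the weight of $B_N[\phi]$ is $\tfrac12\sum_\frakc h_\frakc N_\frakc^{-1}c(\phi_\frakc;0)$, a sum over all cusps, not $\tfrac12 c_{g^d}(0,0)$; and $\Phi_g$ is not "holomorphic by inspection of its product expansion" — for $g=11\text{A}$ and $23\text{AB}$ one has $k_g\le 0$, so these forms are genuinely meromorphic, which is why the paper's conjecture and theorem are stated for meromorphic Siegel modular forms.
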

The affirmative answer to this conjecture would mean significant support for the idea that a vertex operator algebra with $M_{24}$ action exists.  It also has an interpretation in terms of root multiplicities of generalized Kac-Moody algebras~\cite{Che10}.

The cases $g \in \{ 1\text{A}, 2\text{A}, 3\text{A}, 4\text{B} \}$ of Conjecture~\ref{conj:cheng-duncan} were solved using Borcherds products (see, e.g.,~\cite{CG08}).  We are able to resolve all cases for which $N_g$ is a prime power.  More precisely, we prove the following result:
\begin{theorem}
\label{maintheorem}
For
\begin{gather*}
  g
\in
  \big\{ 1\text{A}, 2\text{A}, 2\text{B}, 3\text{A}, 3\text{B},
         4\text{A}, 4\text{B}, 4\text{C}, 5\text{A},
         7\text{AB}, 8\text{A}, 11\text{A}, 23\text{AB}
  \big\}
\text{,}
\end{gather*}
Conjecture~\ref{conj:cheng-duncan} is true.
\end{theorem}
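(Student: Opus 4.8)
The plan is to realize a suitable positive power of each $\Phi_g$ as a product of \emph{rescaled Borcherds products}, and then to descend from that power to $\Phi_g$ itself. Recall that $\Phi_g$ is the explicit infinite product of~\eqref{eq:chengducan_productexpansion}, whose exponents are Fourier coefficients of the $g$\nbd twisted elliptic genus of $K3$, and that this genus is assembled from weak Jacobi forms of weight $0$ and index $1$ for $\Gamma_0(n_g)$, with $n_g$ the order of $g$. On the other side, the Borcherds (Gritsenko) lift attaches to a weakly holomorphic Jacobi form of weight $0$, integral index and integral principal part a meromorphic Siegel modular form that is itself an infinite product of the shape~\eqref{eq:chengducan_productexpansion}, and whose weight, multiplier character and divisor (a sum of Humbert surfaces) are read off the input. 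So the goal is to exhibit $\Phi_g^{\,e_g}$, for a suitable $e_g \in \NN$, as $\prod_j B_j(\alpha_j Z)$, where each $B_j$ is a genuine Borcherds product for $\Sp{2}(\ZZ)$ or for $\Gamma_0^{(2)}(p)$, with $p$ the prime dividing $N_g$, and each $\alpha_j$ is a rescaling (a diagonal element of $\mathrm{GSp}_4(\QQ)$ with entries dividing $N_g$). The four classes $1\text{A}, 2\text{A}, 3\text{A}, 4\text{B}$ are the special case where a single such Borcherds product already suffices; the new content is the remaining prime\nbd power classes $2\text{B}, 3\text{B}, 4\text{A}, 4\text{C}, 5\text{A}, 7\text{AB}, 8\text{A}, 11\text{A}, 23\text{AB}$.

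The key steps, in order, are: (i) express the exponents of $\Phi_g$ through the Fourier coefficients of the Jacobi forms making up the twisted elliptic genus, and write those Jacobi forms as $\ZZ$\nbd linear combinations of weakly holomorphic Jacobi forms that are modular for $\Sp{2}(\ZZ)$ or $\Gamma_0^{(2)}(p)$, each pulled back along a rescaling $\tau \mapsto d\,\tau$ with $d \mid N_g$; (ii) apply the Borcherds/Gritsenko construction to each summand to obtain a meromorphic Siegel modular form $B_j$, recording its weight, its character and its divisor; (iii) substitute $Z \mapsto \alpha_j Z$, set $\Psi := \prod_j B_j(\alpha_j Z)$, and check that the automorphy group $\bigcap_j \alpha_j^{-1}\Gamma_j\alpha_j$ of $\Psi$ contains $\Gamma_0^{(2)}(N_g)$; (iv) verify that the weights add up to $e_g \cdot \mathrm{wt}(\Phi_g)$, that the characters combine to a character of $\Gamma_0^{(2)}(N_g)$, and --- the crucial bookkeeping --- that the divisors add up to exactly $e_g \cdot \mathrm{div}(\Phi_g)$; then $\Psi/\Phi_g^{\,e_g}$ is holomorphic, nowhere vanishing and invariant under all integral translations $Z \mapsto Z + S$, hence constant by comparison of Fourier expansions, so $\Phi_g^{\,e_g}$ is a meromorphic Siegel modular form of level $N_g$; (v) finally descend, using that $\Phi_g$ has an honest single\nbd valued product expansion: its multiplier system is therefore a genuine character of $\Gamma_0^{(2)}(N_g)$, of finite order since it is trivial on the finite\nbd index subgroup where $\Phi_g^{\,e_g}$ lives, so $\Phi_g$ is itself a meromorphic --- and, by inspection of the product, holomorphic --- Siegel modular form of level $N_g$.

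Steps (i) and (iv) are controlled by the arithmetic of the even lattice of signature $(2,3)$ attached to $\Gamma_0^{(2)}(N_g)$ (of the form $U \oplus U(N_g) \oplus \langle -2\rangle$, up to the usual normalisation): existence of an input Jacobi form with a prescribed singular part is, by Serre duality, equivalent to vanishing of its pairing with a finite\nbd dimensional space of cusp forms of dual weight, and the rescaling operators in (iii) correspond to Atkin\nbd Lehner type operators on the discriminant form. When $N_g$ is a prime power these discriminant forms, obstruction spaces, and operator combinatorics are small enough to be pinned down explicitly, in part by a finite computation, and the list in the theorem is precisely the set of classes with $N_g$ a prime power. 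The main difficulty is step (iv): producing Borcherds inputs whose Heegner/Humbert divisors \emph{exactly} reconstruct $e_g \cdot \mathrm{div}(\Phi_g)$ inside the obstruction\nbd constrained space of admissible principal parts, while simultaneously matching all multiplier characters. For composite $N_g$ with two or more prime factors the relevant obstruction spaces and the rescaling combinatorics grow beyond what this argument controls, which is why the classes $6\text{A}, 6\text{B}, 10\text{A}, 12\text{A}, 12\text{B}, 14\text{AB}, 15\text{AB}, 21\text{AB}$ remain open.
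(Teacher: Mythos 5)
Your overall strategy --- realize a power $\Phi_g^{e_g}$ as a product of rescaled Borcherds products and then descend to $\Phi_g$ --- is indeed the strategy of the paper (there the exponents are $p_g\in\{1,2,3,8\}$, and the descent is exactly your step (v), with the caveat that for $8\text{A}$, of weight $\tfrac12$, one only gets a multiplier system, and that $\Phi_g$ is in general meromorphic, not holomorphic as you assert). But the way you propose to identify $\Psi=\prod_j B_j(\alpha_j Z)$ with $\Phi_g^{e_g}$ has a genuine gap. In step (iv) you want to match weights, characters and \emph{divisors}, writing $\mathrm{div}(\Psi)=e_g\cdot\mathrm{div}(\Phi_g)$ and concluding that the quotient is a nonvanishing invariant function. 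Before modularity is established, however, $\Phi_g$ is only a formal product converging on some subdomain of $\HS_2$; its divisor on $\Gamma_0^{(2)}(N_g)\backslash\HS_2$ is not defined, and its zero/pole set as an analytic function is not controlled by any Heegner/Humbert bookkeeping --- that bookkeeping is available only for forms already known to be Borcherds lifts. So the divisor comparison is circular as stated. The paper avoids this entirely by comparing the two infinite products coefficientwise: it linearizes product expansions via the map $\cE$ (recording, for each rescaling divisor $d$, the Taylor coefficients $tc_0,tc_2$ of the Jacobi input, with cusp expansions computed through the matrices $\Pi_{\rm FE}$), and Proposition~\ref{prop:modularity-criterion} says that equality of $\cE$-data plus equality of the leading exponents $e_{q_1},e_\zeta,e_{q_2}$ forces literal equality of the products. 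Your quotient argument ``constant by comparison of Fourier expansions'' is, in effect, this direct comparison --- once you do it, the divisor and obstruction-space machinery (Serre duality on $U\oplus U(N)\oplus\langle-2\rangle$, Atkin--Lehner operators on the discriminant form) is unnecessary, and by itself that machinery only yields existence of Borcherds products with prescribed singular data, not the identification of the specific product $\Phi_g^{e_g}$.

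A second concrete problem is your ansatz that each $B_j$ be a Borcherds product for $\Sp{2}(\ZZ)$ or for $\Gamma_0^{(2)}(p)$ with $p$ the prime dividing $N_g$, rescaled by divisors of $N_g$. The actual solutions (Table~\ref{tab:borcherds_products}) require Cl\'ery--Gritsenko products of \emph{all} prime-power levels dividing $N_g$: for $2\text{B}$, $4\text{A}$, $4\text{B}$, $4\text{C}$, $8\text{A}$ one needs inputs of level $4$ and, for $4\text{A}$ and $8\text{A}$, of level $8$, and a level-$4$ or level-$8$ product is not in general a product of rescaled level-$1$ and level-$2$ products. So the search space you describe would be too small for precisely the non-prime prime-power classes that constitute most of the new content. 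Finally, the real substance of the theorem is the explicit list of solutions, found by a finite computation and verified through the cusp-expansion matrices; your sketch acknowledges a ``finite computation'' but neither produces the data nor gives a criterion (such as Proposition~\ref{prop:modularity-criterion}) that turns such data into a proof, so as it stands the argument is incomplete even granting the corrected ansatz.
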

\begin{table}[H]
\framebox[13em]{
\begin{tabular}{l@{\hspace{2em}}ll}
\ \\
\toprule
$g$   & $N_g$ & $k_g$ \\
\midrule
$1\text{A}$   & $1$ & $10$ \\
$2\text{A}$   & $2$ & $6$ \\
$2\text{B}$   & $4$ & $4$ \\
$3\text{A}$   & $3$ & $4$ \\
$3\text{B}$   & $9$ & $2$ \\
$4\text{A}$   & $8$ & $2$ \\
$4\text{B}$   & $4$ & $3$ \\
$4\text{C}$   & $16$ & $1$ \\
$5\text{A}$   & $5$ & $2$ \\
$7\text{AB}$  & $7$ & $1$ \\
$8\text{A}$   & $8$ & $\frac{1}{2}$ \\
$11\text{A}$  & $11$ & $0$ \\
$23\text{AB}$ & $23$ & $-1$ \\
\bottomrule
\\
\end{tabular}
}\vspace{0.5em}
\caption{The level~$N_g$ and the weight~$k_g$ of $\Phi_g$.  A proof is given in Theorem~\ref{thm:phig_modularity} and Corollary~\ref{cor:phig_modularity}.}
\label{tab:Ng}
\end{table}
We remark that the weights~$k_g$ of $\Phi_g$, given in Table~\ref{tab:Ng}, are not to be confused with the weights~$k_g$ that are given in Table~1 of~\cite{CD12}.  The latter are simply half the number of cycles in the conjugacy class~$g$.  Already in the case~$g = \text{1A}$ the two quantities, which then equal $10$ and $12$, are not the same.

The functions $\Phi_g$ given by Cheng and Duncan are product expansions.  In the case $g \in \{ 1\text{A}, 2\text{A}, 3\text{A}, 4\text{B} \}$ they were known to be Borcherds products.  Borcherds products, studied in~\cite{Bo98}, are the only basic construction of modular product expansions that are currently known.  Clearly, products of Borcherds products are modular, too, and so are rescaled Borcherds functions $\Phi(N Z)$ ($0 < N \in \ZZ$), where $\Phi(Z)$ is a Borcherds product.  Thus it is natural to believe that all $\Phi_g$, if they are modular, are products of rescaled Borcherds products.  If they are not, but Conjecture~\ref{conj:cheng-duncan} is still true, it would provide a novel basic construction of modular product expansions, which is unlikely.  For this reason, we have studied the question whether or not the $\Phi_g$ are products of rescaled Borcherds products.

Our proof of Theorem~\ref{maintheorem} relies on linearization of product expansions.  We associate to each product expansions $\Phi$ a vector $\cE(\Phi)$ in a $\ZZ$-module~$E$ that is defined in Section~\ref{sec:preliminaries}.  For any $0 < N \in \ZZ$, we can identify explicitly a submodule~$E_{\rm Bor}(N) \subset E$ associated with products of rescaled Borcherds products of level~$N$.  Proving Theorem~\ref{maintheorem} then amounts to checking whether the rank one module $\ZZ \cdot \cE(\Phi_g)$ has nontrivial intersection with~$E_{\rm Bor}(N_g)$.  However, it is a difficult task to compute $\cE_{\rm Bor}(N)$, which, if $N$ is not squarefree, involves computation of Fourier expansions of elliptic modular forms of non-squrefree level at all cusps.  We employ Sage~\cite{sage} and~\cite{Ra13} to do these computations.
\vspace{1ex}

The paper is organized as follows.  Section~\ref{sec:preliminaries} contains preliminaries on modular forms and product expansions.  In Section~\ref{sec:extendedborcherdsproducts}, we recall the theory of Borcherds products that we will make use of.  We also introduce rescaled Borcherds products.  Section~\ref{sec:chengducan} contains a revision of the material in~\cite{CD12} that is relevant to this paper.  The proof of Theorem~\ref{thm:phig_modularity} is discussed in Section~\ref{sec:results}.  Final remarks are given in Section~\ref{sec:conclusion}.

We have included three appendices.  Appendix~\ref{appendix:how-we-found-the-data} contains a description of how we found, by computer methods, the solutions to the Cheng's and Duncan's modularity problem.  In Appendix~\ref{appendix:projection-matrices}, we briefly describe numerical tests performed to verify data in Table~\ref{tab:projected-cusp-expansions}.  Most tables included in this paper are given in Appendix~\ref{appendix:tables}.

{\tit Acknowledgement:} The author thanks Kathrin Bringmann for helpful conversations about the product expansions in~\cite{CD12}.  He is grateful to \"Ozlem Imamo\u glu for comments on an earlier version of this paper.

\section{Preliminaries}
\label{sec:preliminaries}

Denote the space of elliptic modular forms of weight~$k$ for a finite index subgroup $\Gamma \subset \SL{2}(\ZZ)$ by $\rmM_k(\Gamma)$.  We write $\rmM^{!}_k(\Gamma)$ for the space of weakly holomorphic modular forms of weight~$k$.  Let $\HS \subset \CC$ be the Poincar\'e upper half plane.  The weight~$k$ slash action is denoted by $|_k$.  Precise definitions and a useful introduction into the subject can be found in~\cite{BGHZ08}.

We write $\rmJ_{k, m}(\Gamma) \subseteq \rmJ^{(!)}_{k, m}(\Gamma) \subseteq \rmJ^{!}_{k, m}(\Gamma)$ for the space of Jacobi forms, weak Jacobi forms, and weakly holomorphic Jacobi forms of weight~$k$ and index~$m$ for $\Gamma \ltimes \ZZ^2$.  Jacobi forms are functions on the Jacobi upper half plane $\HS^\rmJ := \HS \times \CC$.  The reader is referred to~\cite{EZ85} for definitions.

Fourier expansions of elliptic modular forms and weak Jacobi forms are central in this paper.  Write $q := e(\tau)$, $\zeta := e(z)$, $e(x) := \exp(2 \pi i \, x)$, where $\tau$ and $(\tau, z)$ are coordinates of the Poincar\'e and the Jacobi upper half plane, respectively.  Write $\pi_{\rm FE}$ for the projection of the Fourier expansion to those terms with integral exponents:
\begin{gather*}
  \pi_{\rmF\rmE} \Big( \sum_{n \in \QQ} c(n) \, q^n \Big)
:=
  \sum_{n \in \ZZ} c(n) \, q^n
\quad
\text{and}
\quad
  \pi_{\rmF\rmE} \Big( \sum_{n \in \QQ,\, r \in \ZZ} c(n, r) \, q^n \zeta^r \Big)
:=
  \sum_{n \in \ZZ\, r \in \ZZ} c(n, r) \, q^n \zeta^r
\text{.}
\end{gather*}

For $0 < N \in \ZZ$, denote the set of cusps of $\Gamma_0(N) \mathop{\backslash} \HS$ by $\cC(N)$.  Fix a cusp $\frakc \in \cC(N)$ and a matrix $\gamma \in \SL{2}(\ZZ)$ such that $\gamma \infty = \frakc$.  For a modular form~$f$ of weight~$k$ or a weak Jacobi form~$\phi$ of weight~$k$ and index~$m$ we write
\begin{gather*}
  \pi_{\rm FE} (f_\frakc)
:=
  \pi_{\rm FE} \big( f \big|_k \gamma \big)
\quad
\text{and}
\quad
  \pi_{\rm FE} (\phi_\frakc)
:=
  \pi_{\rm FE} \big( \phi \big|_{k, m} \gamma \big)
\text{.}
\end{gather*}
This notation is well-defined, since the left hand sides only depend on $\frakc$, but not on $\gamma$.

Given a cusp $\frakc \in \cC(N)$, we write $h_\frakc(N)$ and $e_\frakc(N)$ for the width and the denominator of $\frakc$, respectively.  Fix some $0 \le f_\frakc(N) \le e_\frake(N)$ such that $\frakc = f_\frakc \slashdiv e_\frakc$ as cusps of $\Gamma_0(N) \mathop{\backslash} \HS$.  Set $N_\frakc(N) := N e_\frakc(N)^{-1}$.

\subsection{Elliptic modular forms}
\label{ssec:preliminaries:elliptic-modular-forms}

As a shorthand notation, we will write $\rmM_k(N)$ for $\rmM_k(\Gamma_0(N))$.  Fix $k$ and $N$, and let $d = \dim\, \rmM_k(N)$.  Recall that the echelon basis of elliptic modular forms of weight~$k$ for $\Gamma_0(N)$ consists of elements $f_{k, N; 1}, \ldots, f_{k, N; d} \in \rmM_k(N)$ with Fourier expansions
\begin{gather*}
  f_{k, N; d}(\tau)
=
  \sum_n c(f_{k, N; d}; n)\, q^n
\end{gather*}
such the matrix $\big(c(f_{k, N}^{(d)}; n)\big)_{1 \le i \le d;\, 0 \le n}$ has echelon form.  Define
\begin{gather*}
  \rmM_k(N)[ c_1, \ldots, c_d ]
=
  \sum_{i = 1}^d c_i\, f_{k, N; i}
\text{.}
\end{gather*}
We use this notation frequently in Section~\ref{sec:results}, and in particular, in Table~\ref{tab:borcherds_products}.  We will also identify elliptic modular forms with coordinate (column) vectors with respect to this basis.
\begin{example}
The echelon basis of $\rmM_2(11)$ has form
\begin{align*}
  f_{2, 11; 1}(\tau)
&=
  1 + 12 q^2 + 12 q^3 + 12 q^4 + 12 q^5 + O(q^6)
\text{,}
\\
  f_{2, 11; 2}(\tau)
&=
  q - 2 q^2 - q^3 + 2 q^4 + q^5 + O(q^6)
\text{.}
\end{align*}
Correspondingly, we have
\begin{gather*}
  \rmM_2(11)[c_1, c_2]
=
  c_1 + c_2 q + (12 c_1 - 2 c_2) q^2 + (12 c_1 - c_2) q^3 +  (12 c_1 + 2 c_2) q^4 + (12 c_1 + c_2) q^5 + O(q^6)
\text{.}
\end{gather*}
\end{example}

\begin{proposition}
Suppose that $f \in \rmM_k(N)$ for some $0 < N \in \ZZ$.  If $\frakc \in \cC(N)$ has a representative of the form $1 \slashdiv e \in \QQ$, then $\pi_{\rm FE}(f_\frakc) \in \rmM_k(\Gamma)$, where
\begin{gather*}
  \Gamma
=
  \big\{ \left(\begin{smallmatrix} a & b \\ c & d \end{smallmatrix}\right) \in \Gamma_0(N) \,:\,
     a - d \equiv 0 \pmod{\gcd(e, e^{-1}N)} \big\}
\text{.}
\end{gather*}

In particular, if the odd part of $N$ is squarefree and the even part divides $8$, then $\pi_{\rm FE}(f_\frakc) \in \rmM_k(\Gamma_0(N))$ for all $\frakc \in \cC(N)$.
\end{proposition}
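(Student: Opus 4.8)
The plan is to express $\pi_{\rm FE}(f_\frakc)$ as an average of slashes $f \big|_k A$ with $A \in \SL{2}(\ZZ)$ and then to show that right translation by an element of $\Gamma$ merely permutes the terms of that average. Fix a representative $1 \slashdiv e$ of $\frakc$ with $e \mid N$, put $\gamma = \left(\begin{smallmatrix} 1 & 0 \\ e & 1 \end{smallmatrix}\right)$ and $T = \left(\begin{smallmatrix} 1 & 1 \\ 0 & 1 \end{smallmatrix}\right)$, and set $m = N/e$, $g = \gcd(e, m) = \gcd(e, e^{-1}N)$. Then $\gamma \infty = \frakc$, so $\pi_{\rm FE}(f_\frakc) = \pi_{\rm FE}(f \big|_k \gamma)$. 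A one-line computation shows $\gamma T^n \gamma^{-1}$ has lower-left entry $-e^2 n$, hence $\gamma T^n \gamma^{-1} \in \Gamma_0(N)$ exactly when $h := m/g$ divides $n$; so $h$ is the width $h_\frakc(N)$, and $f \big|_k \gamma$ is $T^h$-invariant and has an expansion in powers of $q^{1/h}$. Since $\tfrac1h\sum_{j=0}^{h-1} e(j n)$ equals $1$ for $n \in \ZZ$ and $0$ for $n \in \tfrac1h\ZZ \setminus \ZZ$, this gives
\begin{gather*}
  \pi_{\rm FE}(f_\frakc)
=
  \frac1h \sum_{j = 0}^{h - 1} \big( f \big|_k \gamma \big) \big|_k T^j
=
  \frac1h \sum_{j = 0}^{h - 1} f \big|_k \big( \gamma\, T^j \big)
\text{.}
\end{gather*}
The right-hand side is a finite $\CC$-linear combination of slashes of $f$ by matrices in $\SL{2}(\ZZ)$, hence holomorphic on $\HS$ and at every cusp; only invariance under $\Gamma$ remains.

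For that I first note some arithmetic. Writing $e = g e_1$, $m = g m_1$ with $\gcd(e_1, m_1) = 1$, we have $\gcd(e/g, h) = \gcd(e_1, m_1) = 1$; and since $h \mid N \mid c$ for $M = \left(\begin{smallmatrix} a & b \\ c & d \end{smallmatrix}\right) \in \Gamma_0(N)$, we get $a d \equiv 1 \pmod h$, so $a, d$ are units mod $h$. Fix such an $M$ and an integer $j$, and seek an integer $j'$ with $\gamma\, T^j M T^{-j'} \gamma^{-1} \in \Gamma_0(N)$. This matrix always lies in $\SL{2}(\ZZ)$, and a short computation shows that modulo $N$ its lower-left entry is congruent to $e\big((a - d) - e b - e j d + e a j'\big)$ (using $N \mid c$); since $N = e m$, membership in $\Gamma_0(N)$ is therefore equivalent to
\begin{gather*}
  (a - d) - e b - e j d + e a j' \equiv 0 \pmod m
\text{.}
\end{gather*}
As $g \mid e$, the left-hand side is divisible by $g$ if and only if $g \mid (a - d)$, i.e.\ if and only if $M \in \Gamma$; and then, dividing by $g$, the congruence becomes $\tfrac{e}{g} a\, j' \equiv \tfrac{e}{g}(b + j d) - \tfrac{a - d}{g} \pmod h$. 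Since $\tfrac{e}{g} a$ is a unit mod $h$, this pins down $j' \bmod h$ uniquely, say $j' \equiv \sigma(j)$; and because $\sigma$ is an affine map of $\ZZ / h \ZZ$ with unit linear coefficient $a^{-1} d$, it is a bijection. (The condition on $j'$ depends only on $j' \bmod h$ because $\gamma T^{-h} \gamma^{-1} \in \Gamma_0(N)$.)

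Combining these, for $M \in \Gamma$,
\begin{gather*}
  \pi_{\rm FE}(f_\frakc) \big|_k M
=
  \frac1h \sum_{j = 0}^{h - 1} f \big|_k \big( \gamma\, T^j M \big)
=
  \frac1h \sum_{j = 0}^{h - 1} f \big|_k \big( \gamma\, T^{\sigma(j)} \big)
=
  \frac1h \sum_{n = 0}^{h - 1} f \big|_k \big( \gamma\, T^{n} \big)
=
  \pi_{\rm FE}(f_\frakc)
\text{,}
\end{gather*}
where the second equality uses $f \big|_k\big(\gamma T^j M T^{-\sigma(j)} \gamma^{-1}\big) = f$ together with the fact that $f \big|_k (\gamma T^n)$ depends only on $n \bmod h$, and the third uses that $\sigma$ permutes $\ZZ/h\ZZ$. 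Thus $\pi_{\rm FE}(f_\frakc) \in \rmM_k(\Gamma)$. For the final claim, suppose the odd part of $N$ is squarefree and the even part divides $8$. Then $\gcd(e, N/e) \in \{1, 2\}$ for every $e \mid N$ (an odd prime dividing it would produce a square factor of $N$, and $v_2(\gcd(e, N/e)) = \min(v_2(e), v_2(N) - v_2(e)) \le 1$ as $v_2(N) \le 3$), so $\phi(\gcd(c, N/c)) = 1$ for every $c \mid N$ and every cusp of $\Gamma_0(N)$ is represented by some $1 \slashdiv e$ with $e \mid N$. For such $e$: if $\gcd(e, N/e) = 1$ then $\Gamma = \Gamma_0(N)$ outright; if $\gcd(e, N/e) = 2$ then $4 \mid N$, so for every $M \in \Gamma_0(N)$ the entry $c$ is even and $a d$ is odd, forcing $a \equiv d \pmod 2$, and again $\Gamma = \Gamma_0(N)$. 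Hence $\pi_{\rm FE}(f_\frakc) \in \rmM_k(\Gamma_0(N))$ for all $\frakc \in \cC(N)$.

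The step I expect to be the main obstacle is the explicit computation in the third paragraph: forming $\gamma\, T^j M T^{-j'} \gamma^{-1}$, extracting and reducing its lower-left entry modulo $N$ without sign or divisibility slips, and then observing that the resulting linear congruence in $j'$ is solvable precisely when $g \mid (a - d)$ and, when it is, has a solution that depends bijectively on $j$. Conceptually this says that the only obstruction to $\Gamma_0(N)$-modularity of $\pi_{\rm FE}(f_\frakc)$ is a permutation of its $h$ translates, a permutation governed by $a - d$ modulo $\gcd(e, e^{-1}N)$. A minor secondary point, needed only for the last statement, is that under the stated hypothesis every cusp is of the form $1 \slashdiv e$ with $e \mid N$; this follows at once from the cusp count $\sum_{c \mid N}\phi(\gcd(c, N/c))$ for $\Gamma_0(N)$.
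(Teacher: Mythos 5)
Your proof is correct and takes essentially the same route as the paper's: you write $\pi_{\rm FE}(f_\frakc)$ as an average of the translates $f\big|_k(\gamma T^j)$ and show that slashing by $M \in \Gamma$ merely permutes them, the permutation coming from the linear congruence in $j'$, which is solvable precisely because $\gcd(e, e^{-1}N) \mid a-d$ and $\tfrac{e}{\gcd(e,e^{-1}N)}a$ is a unit modulo the width. Your direct observation that $\gcd(e/g, h) = 1$ always holds (bypassing the paper's reduction to prime powers) and your explicit verification of the final clause (every cusp is of the form $1 \slashdiv e$ and $\Gamma = \Gamma_0(N)$ under the stated hypothesis on $N$) are minor streamlinings and completions of the same argument.
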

\begin{proof}
Let $\frakc \in \cC(N)$ with representative $1 \slashdiv e$, as in the statement.  By definition,
\begin{gather*}
  \pi_{\rm FE}( f_\frakc )
=
  \pi_{\rm FE}( f \big|_k \left(\begin{smallmatrix} 1 & 0 \\ e & 1 \end{smallmatrix}\right) )
=
  \sum_{n \pmod{h_\frakc(N)}}
  f \big|_k \left(\begin{smallmatrix} 1 & 0 \\ e & 1 \end{smallmatrix}\right)
            \left(\begin{smallmatrix} 1 & n \\ 0 & 1 \end{smallmatrix}\right)
\text{.}
\end{gather*}

For $\left(\begin{smallmatrix} a & b \\ c & d \end{smallmatrix}\right) \in \Gamma$ and any choices of $n'(n)$, we have
\begin{gather*}
  \sum_{n \pmod{h_\frakc(N)}}
  \Big(
  f \big|_k \left(\begin{smallmatrix} 1 & 0 \\ e & 1 \end{smallmatrix}\right)
            \left(\begin{smallmatrix} 1 & n \\ 0 & 1 \end{smallmatrix}\right)
            \left(\begin{smallmatrix} a & b \\ c & d \end{smallmatrix}\right)
            \left(\begin{smallmatrix} 1 & -n'(n) \\ 0 & 1 \end{smallmatrix}\right)
            \left(\begin{smallmatrix} 1 & 0 \\ -e & 1 \end{smallmatrix}\right)
  \Big) \Big|_k
            \left(\begin{smallmatrix} 1 & 0 \\ e & 1 \end{smallmatrix}\right)
            \left(\begin{smallmatrix} 1 & n'(n) \\ 0 & 1 \end{smallmatrix}\right)
\text{.}
\end{gather*}
We shall want to choose $n'(n)$ such that it runs through a system of representatives ${\rm mod}\; h_\frakc(N)$ as $n$ does, and the bottom left entry of the inner matrix product is divisible by $N$.  Then we can sum over $n'$ instead of $n$, and we will have shown that
\begin{gather*}
  \pi_{\rm FE}( f_\frakc ) \big|_k 
  \left(\begin{smallmatrix} a & b \\ c & d \end{smallmatrix}\right)
=
  \pi_{\rm FE}( f_\frakc )
\text{,}
\end{gather*}
which proves the desired statement.

The divisibility condition on the bottom left entry of the inner matrix product is equivalent to
\begin{gather*}
  e a \,n'(n)
\equiv
  a - d - e b - e d n
  \pmod{e^{-1}N}
\text{.}
\end{gather*}
By the condition on $a - d$, this is the same as
\begin{gather*}
  \frac{e}{\gcd(e, e^{-1}N)} a n'(n)
\equiv
  \frac{a - d - e b - e d n}{\gcd(e, e^{-1}N)}
  \pmod{\gcd(e^2, N)^{-1} N}
\text{.}
\end{gather*}
We have $h_\frakc(N) = \gcd(e^2, N)^{-1} N$, so the congruence condition is a condition ${\rm mod}\; h_\frakc(N)$.  Using the Chinese Remainder Theorem, we can and will assume that $N$ is a prime power.  It is straightforward to check that either $e = \gcd(e, e^{-1}N)$ or $\gcd(e, e^{-1}N)^{-1} f$ is a unit ${\rm mod}\; h_\frakc(N)$.  Since, further, $a$ is coprime to $N$, we can choose
\begin{gather*}
  n'(n)
\equiv
  a^{-1} \Big(
  \big(\frac{e}{\gcd(e, e^{-1}N)}\big)^{-1}
  \frac{a - d}{\gcd(e, e^{-1}N)}
  - b - d n
  \Big)
  \pmod{h_\frakc(N)}
\text{.}
\end{gather*}
Because $d$ is coprime to $N$, too, we find that $n'(n)$ runs through a system of representatives ${\rm mod}\;h_\frakc(N)$ as $n$ does.  This completes the proof.
\end{proof}

From the previous proposition, we find that
\begin{gather*}
  \pi_{\rm FE} (\,\cdot\,_\frakc) :\, \rmM_k(N) \rightarrow \rmM_k(N)
\end{gather*}
for $N \in \{2, 3, 4, 5, 7, 8, 11, 23\}$.  In Table~\ref{tab:projected-cusp-expansions}, we have given these maps as matrices $\Pi_{\rm FE}( k, N, \frakc )$ acting on coordinate (column) vectors with respect to the echelon basis.  They have been computed by the method described in~\cite{Ra13}, which uses algebraic methods.  In Appendix~\ref{appendix:projection-matrices}, we describe additional, numerical tests that have been performed to verify their correctness.

\begin{example}
The first column of $\Pi_{\rm FE}(2, 8; \frac{1}{2})$ equals $(-\frac{1}{8}\, 3\, -3)^\T$.  Therefore, we have
\begin{gather*}
  \pi_{rm FE}\big( \rmM_2(8)[1, 0, 0]_{\frac{1}{2}} \big)
=
  \rmM_2(8)\big[ -\tfrac{1}{8},\, 3,\, -3 \big]
\text{.}
\end{gather*}
More concretely, this means
\begin{gather*}
  \pi_{\rm FE} \big( \rmM_2(8)[1, 0, 0]_{\frac{1}{2}} \big)
=
  -\tfrac{1}{8} + 3q - 3q^{2} + 1332q^{3} + 2172q^{4} + 46026q^{5} + O(q^{6})
\end{gather*}
\end{example}

\subsection{Jacobi forms of index~$1$}

In this paper we only use weak Jacobi forms of weight~$0$ and index~$1$.  Such a Jacobi form~$\phi$ has a Fourier expansion
\begin{gather*}
  \sum_{0 \ge n,\, r \in \ZZ} c(\phi;\, 4 n - r^2) \, q^n \zeta^r
\text{,}
\end{gather*}
Recall the weak Jacobi forms~$\phi_{0, 1}$ and $\phi_{-2, 1}$ defined in~\cite{EZ85}, Theorem~9.3.  In the proof of Theorem~\ref{thm:phig_modularity}, we will need the initial Fourier expansions of $\phi_{-2, 1}$ and $\phi_{0, 1}$.  They equal
\begin{gather}
\label{eq:weak-phi-fourier-expansions}
  \phi_{-2, 1}(\tau, z)
=
  \zeta - 2 + \zeta^{-1} + O(q)
\text{,}
\quad
  \phi_{0, 1}(\tau, z)
=
  \zeta + 10 + \zeta^{-1} + O(q)
\text{.}
\end{gather}

Fix $0 < N \in \ZZ$.  Any $\phi \in \rmJ^{!}_{0, 1}(\Gamma_0(N))$ can be written as
\begin{gather*}
  \phi(\tau, z)
=
  tc_0(\phi)(\tau)\, \frac{\phi_{0,1}(\tau, z)}{12}
  + tc_2(\phi)(\tau)\, \phi_{-2, 1}(\tau, z)
\text{,}
\end{gather*}
where $tc_0(\phi) \in \rmM^{!}_0(\Gamma_0(N))$ and $tc_2(\phi) \in \rmM^{!}_2(\Gamma_0(N))$ are the (rescaled) $0$th and $2$nd Taylor coefficient of~$\phi$.  One can directly verify that $\phi$ is a weak Jacobi form in the sense of~\cite{EZ85} if $tc_0(\phi) \in \CC = \rmM_0(\Gamma_0(N))$ and $tc_2(\phi) \in \rmM_2(\Gamma_0(N))$.  This yields an isomorphism of vector spaces
\begin{gather}
\label{eq:jacobi-modular-forms-isomorphism}
  \rmJ^{(!)}_{0, 1}(\Gamma_0(N))
\rightarrow
  \rmM_0(\Gamma_0(N)) \times \rmM_2(\Gamma_0(N))
\text{,}\quad
  \phi
\mapsto
  \big(tc_0(\phi), tc_2(\phi)\big)
\text{.}
\end{gather}
We will use this in order to reduce our considerations to elliptic modular forms.

In order to apply Theorem~\ref{thm:borcherds-products}, which describes the product expansions that arise via Borcherds's construction, we have to compute Fourier expansions, $\pi_{\rm FE}(\phi_\frakc)$, at all cusps~$\frakc$ for weak Jacobi forms~$\phi$.  Since $\phi_{0, 1}$ and $\phi_{-2, 1}$ are weak Jacobi forms of level~$1$, we get
\begin{gather}
  \pi_{\rm FE}(\phi_\frakc)
=
  \pi_{\rm FE}\big( tc_0(\phi)_\frakc \big)\, \frac{\phi_{0,1}}{12}
  + \pi_{\rm FE}\big( tc_2(\phi)_\frakc \big)\, \phi_{-2, 1}
=
  tc_0(\phi)\, \frac{\phi_{0,1}}{12}
  + \phi_{\rm FE}\big( tc_2(\phi)_\frakc \big)\, \phi_{-2, 1}
\text{.}
\end{gather}

\subsection{Siegel product expansions}
\label{ssec:siegel-product-expansions}

Siegel modular forms (of genus~$2$) are certain functions on the Siegel upper half space
\begin{gather*}
  \HS_2
=
   \{ Z \in \Mat{2}{\CC} \,:\, Z^\T = Z,\, \Im(Z) \text{ positive definite}\}
\end{gather*}
We will write $Z = \left(\begin{smallmatrix} \tau_1 & z \\ z & \tau_2 \end{smallmatrix}\right)$ for the entries of $Z$, and $q_1 := e(\tau_1)$, $\zeta := e(z)$, and $q_2 := e(\tau_2)$ for the corresponding Fourier expansion variables.

Write $\left(\begin{smallmatrix} A & B \\ C & D\end{smallmatrix}\right)$, with $A, B, C, D \in \Mat{2}{\RR}$ for a typical element $\gamma \in \Sp{2}(\RR)$.
\begin{definition}
Let $f :\, \HS_2 \rightarrow \CC$ be a holomorphic function.  We call $f$ a Siegel modular form of weight~$k$ with character~$\chi$ for $\Gamma \subset \Sp{2}(\ZZ)$ if and only if
\begin{gather*}
  \Phi\big( (A Z + B) (C Z + D)^{-1} \big)
=
  \det(C Z + D)^k
  \chi(\left(\begin{smallmatrix} A & B \\ C & D \end{smallmatrix}\right))\,
  \Phi(Z)
\end{gather*}
for all $\gamma \in \Gamma$.
\end{definition}
\noindent
We refer the reader to~\cite{Fr83} for details on Siegel modular forms.  For $0 < N \in \ZZ$, we define
\begin{gather*}
  \Gamma^{(2)}_{0}(N)
:=
  \big\{ \left(\begin{smallmatrix} A & B \\ C & D \end{smallmatrix}\right)
         \,:\, C \equiv 0 \in \Mat{2}{\ZZ} \pmod{N} \big\}
\text{.}
\end{gather*}

Fix $0 < N_\Phi \in \ZZ$ and $e_{q_1}(\Phi), e_\zeta(\Phi), e_{q_2}(\Phi) \in \ZZ$.  Given weak Jacobi forms
\begin{gather}
  \psi[\Phi, d](\tau, z)
:=
  \sum_{n \ge 0,\, r \in \ZZ}
  c[\Phi, d](4 n - r^2) \, q^n \zeta^r
\in
  \rmJ^{(!)}_{0, 1}(\Gamma_0(N_\Phi))
\end{gather}
for $0 < d \in \ZZ$ that have integral Fourier coefficients and vanish for all but finitely many~$d$, we can define an absolutely convergent product expansion
\begin{gather*}
  \Phi(Z)
=
  q_1^{e_{q_1}(\Phi)} \zeta^{e_\zeta(\Phi)} q_2^{e_{q_2}(\Phi)}
  \prod_{d \isdiv n_\Phi}
  \prod_{(n, r, m) > 0}
  (1 - (q_1^n \zeta^r q_2^m)^d)^{c[\Phi, d](4 n m - r^2)}
\text{.}
\end{gather*}
All product expansions that show up in this paper are of this form.  Write $E$ for the space of functions $\ZZ_{> 0} \rightarrow \rmM_0(N_\Phi) \times \rmM_2(N_\Phi)$ with finite support.  We obtain $\cE(\Phi) \in E$:
\begin{align*}
  \cE(\Phi)
:\,
& {}
  \ZZ_{> 0} \rightarrow \rmM_0(N_\Phi) \times \rmM_2(N_\Phi)
\text{,}
\\[2pt]
  \cE(\Phi)(d)
={}&
  \begin{cases}
   \big( tc_0(\psi[\Phi, d]),\, tc_2(\psi[\Phi, d]) \big)
   \text{,}
   &
   \text{if $d \isdiv n_\Phi$;}
   \\
   0
   \text{,}
   &
   \text{otherwise.}
  \end{cases}
\end{align*}
Product expansions can be represented by $\cE(\Phi)$ and the triple $\big( e_{q_1}(\Phi), e_\zeta(\Phi), e_{q_2}(\Phi) \big)$.

Observe that
\begin{gather}
\label{eq:cE-additivity}
  \cE( \Phi \Phi' )
=
  \cE(\Phi) + \cE(\Phi')
\end{gather}
for product expansions $\Phi$ and $\Phi'$.  That is, the map $\cE$ linearizes our problem of identifying Cheng's and Duncan's product expansion as products of rescalded Borcherds products.

\section{Rescaled Borcherds products}
\label{sec:extendedborcherdsproducts}

We recall a special case of a theorem by Cl\'ery and Gritsenko~\cite{CG08}, which is a refinement and reformulation in terms of Jacobi forms, of Borcherds's result~\cite{Bo98} on product expansions for Siegel modular forms.

For a triple $(n, r, m)$ of integers, $(n, r, m) > 0$ means that $n > 0$, or $n = 0$ and $m > 0$, or $n = m = 0$ and $r < 0$.
\begin{theorem}[Cl\'ery-Gritsenko]
\label{thm:borcherds-products}
Let $\phi(\tau, z) = \sum_{n, r} c(4 n - r^2)\, q^n \zeta^r \in J_{0,1}^!(\Gamma_0(N))$.  Write $c(\phi_\frakc;\, \Delta)$ for the Fourier coefficients of $\phi$ at $\frakc$, and assume that for all $\frakc \in \cC(N)$ we have $h_\frakc(N) N_\frakc(N)^{-1} \, c(\phi_\frakc;\, \Delta) \in \ZZ$, if $\Delta \le 0$.  Let
\begin{gather*}
  B_N[\phi](Z)
=
  q_1^{e_{q_1}(\phi)} \zeta^{e_\zeta(\phi)} q_2^{e_{q_2}(\phi)}
  \prod_{\frakc \in \cC(N)}
  \prod_{(n, r, m) > 0} \big(1 - (q_1^n \zeta^r q_2^m)^{N_\frakc} \big)^{h_\frakc(N) N_\frakc(N)^{-1} \, c(\phi_\frakc;\, 4 n m - r^2)}
\end{gather*}
where
\begin{align*}
  e_{q_1}(\phi)
=
  e_{q_1}(B_N[\phi])
&=
  \frac{1}{24} \sum_{\substack{\frakc \in \cC(N) \\ l \in \ZZ}} \!\!
               h_\frakc(N) c(\phi_\frakc;\, -l^2)
\text{,}
\\[2pt]
  e_\zeta(\phi)
=
  e_\zeta(B_N[\phi])
&=
  \frac{1}{2} \sum_{\substack{\frakc \in \cC(N) \\ 0 < l \in \ZZ}} \!\!
              l \, h_\frakc(N) c(\phi_\frakc;\, -l^2)
\text{,}
\\[2pt]
  e_{q_2}(\phi)
=
  e_{q_2}(B_N[\phi])
&=
  \frac{1}{4} \sum_{\substack{\frakc \in \cC(N) \\ l \in \ZZ}} \!\!
              l^2 h_\frakc(N) c(\phi_\frakc;\, -l^2)
\text{.}
\end{align*}
Set
\begin{gather*}
  k
=
  \frac{1}{2} \sum_{\frakc \in \cC(N)} \!\!
              h_\frakc(N) N_\frakc(N)^{-1} \, c_\frakc(0)
\text{.}
\end{gather*}

Then $B_N[\phi]$ is a meromorphic Siegel modular form of weight~$k$ with character for $\Gamma^{(2)}_0(N)$.
\end{theorem}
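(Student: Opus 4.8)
The plan is to obtain Theorem~\ref{thm:borcherds-products} from the general automorphic product theorem of Borcherds~\cite{Bo98}, in the Jacobi-form reformulation due to Cl\'ery and Gritsenko~\cite{CG08}; since the statement is advertised as a special case of~\cite{CG08}, the most economical route is simply to quote their theorem and to check that the normalisations of Jacobi forms and cusp expansions used there agree with those fixed in Section~\ref{sec:preliminaries}. Let me indicate the underlying mechanism. The first step is a change of language: by the theta decomposition for Jacobi forms of index~$1$, a weakly holomorphic $\phi \in \rmJ^{!}_{0, 1}(\Gamma_0(N))$ corresponds to a weakly holomorphic vector-valued modular form of weight~$-\tfrac12$ for the Weil representation of an even lattice~$L$ of signature~$(2,3)$, the level~$N$ being absorbed into~$L$. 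One takes~$L$ of the shape $U \oplus U(N) \oplus \langle 2 \rangle$ (with~$U$ the hyperbolic plane), arranged so that the isotropic lines of the discriminant group $L'/L$ are indexed by the cusps $\frakc \in \cC(N)$, the component of the vector-valued form attached to~$\frakc$ is the rescaled Fourier expansion $h_\frakc(N) N_\frakc(N)^{-1}\,\phi_\frakc$, and the denominators $N_\frakc(N)$ occur as orders of isotropic elements. Under this dictionary the hypothesis $h_\frakc(N) N_\frakc(N)^{-1} c(\phi_\frakc; \Delta) \in \ZZ$ for $\Delta \le 0$ is exactly the requirement that the principal part of the vector-valued form be integral, which is what Borcherds's theorem needs (and what makes the exponents of $B_N[\phi]$ integral in the cases occurring in this paper).

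Next, apply Borcherds's singular theta lift~$\Psi$ to this input. Borcherds's theorem then provides at once that~$\Psi$ is a meromorphic automorphic form, with a character, for the discriminant kernel $\wtd{\mathrm{O}}^+(L)$ on the Hermitian domain of $\mathrm{O}(2,3)$; that its divisor is the Heegner divisor read off from the principal part of~$\phi$; and that, on the tube-domain model and inside a Weyl chamber, $\Psi$ has exactly the infinite product shape of $B_N[\phi]$, with leading exponents equal to the coordinates of the Weyl vector. Transporting~$\Psi$ to $\HS_2$ through the exceptional isomorphism between $\Sp{2}$ and $\mathrm{Spin}(2,3)$ identifies it with $B_N[\phi]$: the product exponents~$N_\frakc$ come from the level structure of~$L$, the multiplicities are the numbers $h_\frakc(N) N_\frakc(N)^{-1} c(\phi_\frakc; 4nm - r^2)$, and Borcherds's Weyl-vector formula, evaluated coordinate by coordinate, reduces to the stated sums for $e_{q_1}(\phi)$, $e_\zeta(\phi)$, and $e_{q_2}(\phi)$ (the genus-$2$ specialisations of the generalised-Bernoulli expressions of~\cite{Bo98}). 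The weight is $\tfrac12$ times the $q^0$-coefficient of the $0$-component of the vector-valued form, which in the Jacobi dictionary is $\tfrac12 \sum_{\frakc \in \cC(N)} h_\frakc(N) N_\frakc(N)^{-1} c_\frakc(0)$.

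Finally, the automorphy group: the lattice $L = U \oplus U(N) \oplus \langle 2 \rangle$ is chosen precisely so that, under the exceptional isomorphism, the preimage of $\wtd{\mathrm{O}}^+(L)$ equals $\Gamma^{(2)}_0(N)$, while the finite group $\mathrm{O}^+(L)/\wtd{\mathrm{O}}^+(L)$, which permutes the Heegner divisors, is responsible for the character~$\chi$; absolute convergence for $\Im(Z)$ sufficiently positive is part of Borcherds's theorem (convergence in the interior of a Weyl chamber), and meromorphic continuation to all of $\HS_2$ is then forced by the transformation law under $\Gamma^{(2)}_0(N)$. The main obstacle is the bookkeeping of this dictionary in the presence of non-trivial level: pinning down~$L$ and verifying that its discriminant kernel pulls back to $\Gamma^{(2)}_0(N)$ rather than to a paramodular group, matching the cusps $\frakc \in \cC(N)$ with the isotropic lines of $L'/L$, and checking that the combinatorial weights $h_\frakc(N) N_\frakc(N)^{-1}$ and exponents $N_\frakc(N)$ are exactly the ones that Borcherds's construction outputs, together with tracking the character. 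The analytic content — convergence, the product formula, and the weight and divisor of~$\Psi$ — is entirely Borcherds's and requires no new work.
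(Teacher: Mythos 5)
The paper does not prove this statement at all: it is quoted verbatim as a known special case of Cl\'ery--Gritsenko's theorem, which itself rests on Borcherds's automorphic product construction. Your sketch — theta decomposition to a vector-valued form for a signature $(2,3)$ lattice, Borcherds's singular theta lift, and transport through the isomorphism between $\Sp{2}$ and $\mathrm{Spin}(2,3)$ with the discriminant kernel pulling back to $\Gamma^{(2)}_0(N)$ — is precisely the route taken in the cited reference, so it is consistent with the paper's treatment (which simply defers to that reference), with the only real work being the level-$N$ bookkeeping you correctly flag as the delicate part.
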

\noindent
Given $0 < n \in \ZZ$, we write $B_N[\phi, n](Z) = B_N[\phi](n Z)$ for a \emph{rescaled Borcherds product}, which has level~$n N$.

The map $\cE(B_N[\phi, n]) \in E$ associated to $B_N[\phi, n]$ is defined by
\begin{gather}
\label{eq:rescaled-borcherds-product-cE}
  \cE\big( B_N[\phi, n] \big)(d)
=
  \sum_{\frakc \in \cC(N),\, N_\frakc(N) = d / n}
  \frac{h_\frakc(N)}{N_\frakc(N)} \cdot \;
  \Big(
   \pi_{\rmF \rmE}\big( tc_0(\phi)_\frakc \big),\,
   \pi_{\rmF \rmE}\big( tc_2(\phi)_\frakc \big)
  \Big)
\text{.}
\end{gather}
The level $N(B[\phi, n])$ of modular forms in the image equals $N$.  Furthermore, we have
\begin{gather}
\label{eq:rescaled-borcherds-product-e}
  e_{q_1}(B_N[\phi, n]) = n \, e_{q_1}(B_N[\phi])
\text{,}
\quad
  e_{\zeta}(B_N[\phi, n]) = n \, e_{\zeta}(B_N[\phi])
\text{, and}
\quad
  e_{q_2}(B_N[\phi, n]) = n \, e_{q_2}(B_N[\phi])
\text{.}
\end{gather}

\section{Siegel Product expansions by Cheng-Duncan}
\label{sec:chengducan}

Consider the following family of weak Jacobi forms $\cZ_g$, the $M_{24}$-twisted elliptic genus.
\begin{gather*}
  \cZ_g(\tau, z)
:=
  \chi(g)\, \frac{\phi_{0,1}(\tau, z)}{12} + \widetilde{T}_g(\tau)\, \phi_{-2, 1}(\tau, z)
\text{,}
\end{gather*}
where $\widetilde{T}_g$ is given in Table~2 of \cite{CD12} and, partially, in Table~\ref{tab:chi-and-tdT}.  We have $tc_0(\cZ_g) = \chi(g)$ and $tc_2(\cZ_g) = \widetilde{T}_g$.  By Proposition~3.1 in~\cite{CD12}, the $\cZ_g$ are weak Jacobi forms of weight~$0$ and index~$1$ for the Jacobi group $\Gamma_0(N_g) \ltimes \ZZ^2$.  That is, $\widetilde{T}_g$ is a modular form of level dividing $N_g$.  We write
\begin{gather}
  \cZ_g(\tau, z)
=
  \sum_{n \ge 0,\, r \in \ZZ} c_g(4 n - r^2) \, q^n \zeta^r
\end{gather}
for the Fourier expansion of $\cZ_g$.

In Section~4 of~\cite{CD12}, Cheng and Duncan give the following product expansion
\begin{gather*}
  \Phi_g(Z)
:=
  q_1 \zeta q_2 \,
  \prod_{(n, r, m) > 0} \exp\Big( -\sum_{k = 1}^\infty \frac{c_{g^k}(4 n m - r^2)}{k} \, (q_1^n \zeta^r q_2^m)^k  \Big)
\text{.}
\end{gather*}
The product runs over triples of integers $(n, r, m)$.  Recall that $(n, r, m) > 0$ means that $n > 0$, or $n = 0$ and $m > 0$, or $n = m = 0$ and $r < 0$.  In analogy with Formula~(3.9) of~\cite{Che10}, we can rewrite the above as
\begin{gather}
\label{eq:chengducan_productexpansion}
  \Phi_g(Z)
=
  q_1 \zeta q_2 \,
  \prod_{d \isdiv n_g}
  \prod_{(n, r, m) > 0} \big( 1 - (q_1^n \zeta^r q_2^m)^d \big)^{c_{g, d}(4 n m - r^2)}
\text{,}
\end{gather}
where
\begin{gather*}
  c_{g, d} (D)
:=
  -d^{-1} \, \sum_{d' \isdiv d} \, \mu(d / d') c_{g^{d'}}(D)
\end{gather*}
and the M\"obius function is denoted by $\mu$.  The $c_{g, d}$ are Fourier coefficients of a weak Jacobi form of weight~$0$ and index~$1$:
\begin{gather}
\label{eq:Zgd}
  \cZ_{g, d}(\tau, z)
=
  \sum_{n \ge 0,\, r \in \ZZ} c_{g, d}(4 n - r^2) \, q^n \zeta^r
:=
  - d^{-1} \, \sum_{d' \isdiv d} \mu(d / d') \cZ_{g^{d'}} (\tau, z)
\text{.}
\end{gather}
Hence $\Phi_g$ is a product expansion to which we can associate $\cE(\Phi_g)$ in the sense of Section~\ref{ssec:siegel-product-expansions}.  It equals
\begin{gather}
\label{eq:cheng-duncan-cE}
  \cE(\Phi_g)(d)
=
  \begin{cases}
    - d^{-1} \, \sum_{d' \isdiv d} \mu(d / d') \cdot \;
    \Big( \chi(g^{d'}), \,
          \widetilde{T}_{g^{d'}} \Big)
   \text{,}
   &
     \text{if $d \isdiv n_g$;}
   \\
    0
   \text{,}
   &
    \text{otherwise.}
  \end{cases}
\end{gather}
Precise expressions for all $g$ that we treat in this paper are given in Table~\ref{tab:Zg}.

\section{Proof of the Main Theorem}
\label{sec:results}

We will prove Theorem~\ref{thm:phig_modularity}, which implies our main result, by applying the following proposition to solutions given in Table~\ref{tab:borcherds_products}.
\begin{proposition}
\label{prop:modularity-criterion}
Suppose that $\Phi$ is a product expansion in the sense of Section~\ref{ssec:siegel-product-expansions} satisfying, for some $0 < N \in \ZZ$ and some $0 < r \in \ZZ$,
\begin{gather*}
  \cE(\Phi)
=
  \sum_{i = 1}^r \cE(B_{N_i}[\phi_i, n_i])
\text{,}
\end{gather*}
where $\phi_i \in \rmJ^{(!)}_{0, 1}(N_i)$ and $n_i N_i \isdiv N$ for all $1 \le i \le r$.  If
\begin{gather*}
  e_{q_1}(\Phi)
=
  \sum_i e_{q_1}( B_{N_i}[\phi_i, n_i])
\text{,}
\quad
  e_{\zeta}(\Phi)
=
  \sum_i e_{\zeta}(B_{N_i}[\phi_i, n_i])
\text{,}
\quad
  e_{q_2}(\Phi)
=
  \sum_i e_{q_2}(B_{N_i}[\phi_i, n_i])
\text{,}
\end{gather*}
then
\begin{gather*}
  \Phi
=
  \prod_{i = 1}^r B_{N_i}[\phi_i, n_i]
\text{.}
\end{gather*}
In particular, $\Phi$ is a Siegel modular forms of level~$N$.
\end{proposition}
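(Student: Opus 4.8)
The plan is to exploit that a product expansion $\Phi$ in the sense of Section~\ref{ssec:siegel-product-expansions} is \emph{completely determined} by the pair consisting of $\cE(\Phi) \in E$ and the triple $\big(e_{q_1}(\Phi), e_\zeta(\Phi), e_{q_2}(\Phi)\big)$: by the isomorphism~\eqref{eq:jacobi-modular-forms-isomorphism} the value $\cE(\Phi)(d)$ recovers the weak Jacobi form $\psi[\Phi, d] = tc_0(\psi[\Phi,d])\,\tfrac{\phi_{0,1}}{12} + tc_2(\psi[\Phi,d])\,\phi_{-2,1}$, hence all of its Fourier coefficients $c[\Phi,d](4nm - r^2)$, and together with the exponents of the monomial prefactor $q_1^{e_{q_1}(\Phi)}\zeta^{e_\zeta(\Phi)}q_2^{e_{q_2}(\Phi)}$ this pins $\Phi$ down uniquely. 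So it suffices to show that the candidate $\Psi := \prod_{i=1}^{r} B_{N_i}[\phi_i, n_i]$ is again a product expansion of this shape and carries the same data as $\Phi$; then $\Phi = \Psi$.

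First I would observe that each rescaled Borcherds product $B_{N_i}[\phi_i, n_i]$ is itself a product expansion in the sense of Section~\ref{ssec:siegel-product-expansions}: grouping the cusp factors in Theorem~\ref{thm:borcherds-products} by the value $N_\frakc$ puts $B_{N_i}[\phi_i]$ into the Section~\ref{ssec:siegel-product-expansions} form, and the substitution $Z \mapsto n_i Z$ preserves this form, with associated data given by~\eqref{eq:rescaled-borcherds-product-cE} and~\eqref{eq:rescaled-borcherds-product-e}. Since $n_i N_i \isdiv N$, we have $\rmM_0(N_i)\times\rmM_2(N_i) \subseteq \rmM_0(N)\times\rmM_2(N)$, so every $\cE(B_{N_i}[\phi_i,n_i])$, and hence by hypothesis also $\cE(\Phi)$, takes values in $\rmM_0(N)\times\rmM_2(N)$; thus $\Phi$ and $\Psi$ may be regarded as product expansions at a common level. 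A finite product of product expansions of this type is again one of this type, with coefficient data adding up: iterating~\eqref{eq:cE-additivity} gives $\cE(\Psi) = \sum_{i=1}^{r}\cE(B_{N_i}[\phi_i,n_i])$, and the exponents of the prefactor of a product are the sums of those of the factors, so $e_{q_1}(\Psi) = \sum_i e_{q_1}(B_{N_i}[\phi_i,n_i])$ and likewise for $e_\zeta$ and $e_{q_2}$.

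By the hypotheses of the proposition these right-hand sides are $\cE(\Phi)$, $e_{q_1}(\Phi)$, $e_\zeta(\Phi)$, $e_{q_2}(\Phi)$, so $\Psi$ and $\Phi$ have identical associated data; the uniqueness from the first paragraph then gives $\Phi = \prod_{i=1}^{r}B_{N_i}[\phi_i,n_i]$. For the last assertion: by Theorem~\ref{thm:borcherds-products} each $B_{N_i}[\phi_i]$ is a meromorphic Siegel modular form for $\Gamma^{(2)}_0(N_i)$ with some character, so $B_{N_i}[\phi_i,n_i](Z)=B_{N_i}[\phi_i](n_i Z)$ is one for $\Gamma^{(2)}_0(n_i N_i)\supseteq \Gamma^{(2)}_0(N)$, and a finite product of Siegel modular forms for $\Gamma^{(2)}_0(N)$ of weights $k_i$ and characters $\chi_i$ is a Siegel modular form for $\Gamma^{(2)}_0(N)$ of weight $\sum_i k_i$ and character $\prod_i\chi_i$; hence $\Phi$ has level $N$. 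The step that needs the most care — and the only genuine obstacle — is the bookkeeping behind the first two paragraphs: that $\Phi\mapsto\big(\cE(\Phi),(e_{q_1}(\Phi),e_\zeta(\Phi),e_{q_2}(\Phi))\big)$ is injective on product expansions of the given form, and that this class is closed under finite products with additive data. Everything substantive — the modularity of rescaled Borcherds products — is imported from Theorem~\ref{thm:borcherds-products}, and no analysis beyond the absolute convergence already built into the definition of a product expansion is required.
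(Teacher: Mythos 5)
Your proof is correct and takes essentially the same route as the paper: additivity of $\cE$ plus the fact that a product expansion is determined by $\cE(\Phi)$ together with the exponent triple $\big(e_{q_1},e_\zeta,e_{q_2}\big)$ yields $\Phi=\prod_i B_{N_i}[\phi_i,n_i]$, and modularity of level~$N$ is imported from Theorem~\ref{thm:borcherds-products} and the rescaling remark. Your extra bookkeeping (injectivity of the data map via Isomorphism~\eqref{eq:jacobi-modular-forms-isomorphism}, closure of product expansions under finite products, and $\Gamma_0^{(2)}(N)\subseteq\Gamma_0^{(2)}(n_iN_i)$) merely makes explicit what the paper's proof leaves implicit.
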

\begin{proof}
By Property~\eqref{eq:cE-additivity} of $\cE$, we have
\begin{gather*}
\cE(\Phi)
=
  \cE\big( \prod_{i = 1}^r B_{N_i}[\phi_i, n_i] \big)
\text{.}
\end{gather*}
By definition of $\cE$, we then find
\begin{gather*}
  q_1^{-e_{q_1}(\Phi)}
  \zeta^{-e_{\zeta}(\Phi)}
  q_2^{-e_{q_2}(\Phi)} \;
  \Phi(Z)
=
  q_1^{-\sum_i e_{q_1}(B[\phi_i, n_i])}
  \zeta^{-\sum_i e_{\zeta}(B[\phi_i, n_i])}
  q_2^{-\sum_i e_{q_2}(B[\phi_i, n_i])} \;
  \prod_{i = 1}^r B[\phi_i, n_i](Z)
\text{.}
\end{gather*}
The conditions on $e_{q_1}$, $e_{\zeta}$, and $e_{q_2}$ ensure that we have
\begin{gather*}
  \Phi
=
  \prod_{i = 1}^r B_{N_i}[\phi_i, n_i]
\text{,}
\end{gather*}
as desired.
\end{proof}

\begin{theorem}
\label{thm:phig_modularity}
For
\begin{gather*}
  g
\in
  \big\{ 1\text{A}, 2\text{A}, 2\text{B}, 3\text{A}, 3\text{B},
         4\text{A}, 4\text{B}, 4\text{C}, 5\text{A},
         7\text{AB}, 8\text{A}, 11\text{A}, 23\text{AB}
  \big\}
\text{,}
\end{gather*}
let $p_{3\text{B}} = 3$, $p_{4\text{A}} = 2$, $p_{4\text{C}} = 8$, $p_{8\text{A}} = 8$, and $p_g = 1$ in all other cases.  We have
\begin{gather*}
  \Phi_g^{p_g}
=
  \prod_i B_{N_{g, i}}[p_g\, \phi_{g,i}, n_{g,i}]
\text{,}
\end{gather*}
where $N_{g, i}$, $\phi_{g, i}$, and $n_{g, i}$ are given in Table~\ref{tab:borcherds_products} on page~\pageref{tab:borcherds_products}, and we let
\begin{gather*}
  \phi_{g, i}
=
  tc_0(\phi_{g, i})\, \frac{\phi_{0,1}}{12}
  + tc_2(\phi_{g, i})\, \phi_{-2, 1}
\end{gather*}
in accordance with Isomorphism~\eqref{eq:jacobi-modular-forms-isomorphism}.

In particular, $\Phi_g^{p_g}$ is a Siegel modular form of level~$N_g$ and weight~$p_g k_g$.
\end{theorem}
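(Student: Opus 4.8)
The strategy is to reduce everything to Proposition~\ref{prop:modularity-criterion}, so that, once the data of Table~\ref{tab:borcherds_products} are available, the argument becomes a finite, explicitly computable verification of linear identities over the echelon bases of $\rmM_0(N_g)$ and $\rmM_2(N_g)$. Fix $g$ in the list and let $p_g$, $N_{g,i}$, $\phi_{g,i}$, $n_{g,i}$ be as in the statement and in Table~\ref{tab:borcherds_products}. First I would check that each $\phi_{g,i}$ is genuinely a weak Jacobi form to which Theorem~\ref{thm:borcherds-products} applies: by Isomorphism~\eqref{eq:jacobi-modular-forms-isomorphism} it suffices that $tc_0(\phi_{g,i}) \in \rmM_0(\Gamma_0(N_{g,i})) = \CC$ and $tc_2(\phi_{g,i}) \in \rmM_2(\Gamma_0(N_{g,i}))$, which one reads off the table, and then that $p_g\,\phi_{g,i}$ meets the integrality hypothesis $h_\frakc(N_{g,i}) N_\frakc(N_{g,i})^{-1}\, c\big((p_g\phi_{g,i})_\frakc;\,\Delta\big) \in \ZZ$ for all $\frakc \in \cC(N_{g,i})$ and all $\Delta \le 0$. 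Every level $N_{g,i}$ occurring lies among $1,2,3,4,5,7,8,11,23$, so the Proposition of Section~\ref{ssec:preliminaries:elliptic-modular-forms} gives $\pi_{\rm FE}(f_\frakc) \in \rmM_k(\Gamma_0(N_{g,i}))$, the required cusp expansions are the tabulated matrices $\Pi_{\rm FE}(k, N_{g,i}, \frakc)$ of Table~\ref{tab:projected-cusp-expansions}, and the integrality check is then immediate.

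Next I would compute both sides of the $\cE$-identity required by Proposition~\ref{prop:modularity-criterion}. On the left, additivity~\eqref{eq:cE-additivity} gives $\cE(\Phi_g^{p_g}) = p_g\,\cE(\Phi_g)$, and $\cE(\Phi_g)$ is known exactly: for each $d \isdiv n_g$ it is the vector~\eqref{eq:cheng-duncan-cE} built from the characters $\chi(g^{d'})$ and the functions $\widetilde T_{g^{d'}}$ of Table~\ref{tab:chi-and-tdT}, with the resulting coordinates in the echelon bases recorded in Table~\ref{tab:Zg}. On the right, additivity again gives $\cE\big(\prod_i B_{N_{g,i}}[p_g\phi_{g,i}, n_{g,i}]\big) = \sum_i \cE\big(B_{N_{g,i}}[p_g\phi_{g,i}, n_{g,i}]\big)$, and each summand is computed termwise from~\eqref{eq:rescaled-borcherds-product-cE}, i.e.\ by summing $h_\frakc(N_{g,i}) N_\frakc(N_{g,i})^{-1}\big(\pi_{\rm FE}(tc_0(p_g\phi_{g,i})_\frakc),\, \pi_{\rm FE}(tc_2(p_g\phi_{g,i})_\frakc)\big)$ over the cusps $\frakc$ of $\Gamma_0(N_{g,i})$ with $N_\frakc(N_{g,i}) = d/n_{g,i}$, the projections again being supplied by Table~\ref{tab:projected-cusp-expansions}. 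The $\cE$-identity is then a finite equality of vectors in $\bigoplus_{d}(\rmM_0(N_g)\times\rmM_2(N_g))$, which I would verify with Sage~\cite{sage} and~\cite{Ra13}.

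It remains to match the prefactor exponents and to identify the weight. From~\eqref{eq:chengducan_productexpansion} the prefactor of $\Phi_g$ is $q_1\zeta q_2$, so $e_{q_1}(\Phi_g) = e_\zeta(\Phi_g) = e_{q_2}(\Phi_g) = 1$, whence $e_{q_1}(\Phi_g^{p_g}) = e_\zeta(\Phi_g^{p_g}) = e_{q_2}(\Phi_g^{p_g}) = p_g$. On the Borcherds side, \eqref{eq:rescaled-borcherds-product-e} together with the formulas of Theorem~\ref{thm:borcherds-products} expresses $e_{q_1}, e_\zeta, e_{q_2}$ of $B_{N_{g,i}}[p_g\phi_{g,i}, n_{g,i}]$ purely through the principal coefficients $c\big((p_g\phi_{g,i})_\frakc; -l^2\big)$ already computed, and one checks $\sum_i e_{q_1} = \sum_i e_\zeta = \sum_i e_{q_2} = p_g$; since scaling $\phi_{g,i}$ by $p_g$ scales all $c(\phi_\frakc;\cdot)$ linearly, this is consistent with passing from $\Phi_g$ to $\Phi_g^{p_g}$. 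With the $\cE$-identity and these three identities in hand, Proposition~\ref{prop:modularity-criterion} yields $\Phi_g^{p_g} = \prod_i B_{N_{g,i}}[p_g\phi_{g,i}, n_{g,i}]$ together with modularity for $\Gamma_0^{(2)}(N)$, $N$ the least common multiple of the $n_{g,i}N_{g,i}$; since Table~\ref{tab:borcherds_products} gives $n_{g,i}N_{g,i} \isdiv N_g$ for every $i$, we have $\Gamma_0^{(2)}(N_g) \subseteq \Gamma_0^{(2)}(N)$ and hence level $N_g$. Finally the weight of $\Phi_g^{p_g}$ is the sum of the weights of the factors, each given by the formula $k = \tfrac12\sum_\frakc h_\frakc(N_{g,i}) N_\frakc(N_{g,i})^{-1} c_\frakc(0)$ of Theorem~\ref{thm:borcherds-products} applied to $p_g\phi_{g,i}$; the total comes out to $p_g k_g$, which is at the same time the proof of the weight column of Table~\ref{tab:Ng} invoked in Corollary~\ref{cor:phig_modularity}.

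The genuine difficulty is not this verification but the production of Table~\ref{tab:borcherds_products}: finding, for each $g$, Jacobi forms $\phi_{g,i}$ and rescalings $n_{g,i}$ so that $p_g\,\cE(\Phi_g)$ lies in the submodule $E_{\rm Bor}(N_g) \subset E$ spanned by the $\cE(B_{N_i}[\phi_i, n_i])$ with $n_iN_i \isdiv N_g$, and, crucially, recognizing that for $g \in \{3\text{B}, 4\text{A}, 4\text{C}, 8\text{A}\}$ one is forced to pass to the power $p_g > 1$ because $\cE(\Phi_g)$ itself fails to lie in $E_{\rm Bor}(N_g)$. Computing $E_{\rm Bor}(N_g)$ is the bottleneck when $N_g$ is not squarefree (the cases $N_g \in \{9,8,16\}$): by~\eqref{eq:rescaled-borcherds-product-cE} it requires the Fourier expansions at every cusp of the modular forms of weight $0$ and $2$ of those levels, that is, exactly the matrices $\Pi_{\rm FE}(k, N, \frakc)$, which are obtained by the algebraic algorithm of~\cite{Ra13} and cross-checked numerically in Appendix~\ref{appendix:projection-matrices}; the search for the $\phi_{g,i}$ themselves is described in Appendix~\ref{appendix:how-we-found-the-data}. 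Once these ingredients are in place, the theorem follows from the finite check above.
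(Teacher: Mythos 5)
Your proposal is correct and follows essentially the same route as the paper: reduce the claim to Proposition~\ref{prop:modularity-criterion}, check the integrality hypothesis of Theorem~\ref{thm:borcherds-products} for the $p_g\phi_{g,i}$ via the matrices $\Pi_{\rm FE}(k,N,\frakc)$, and then verify the finite linear-algebra identities for $p_g\,\cE(\Phi_g)$ and for the exponents $e_{q_1}, e_\zeta, e_{q_2}$ (and the weight $p_g k_g$) in the echelon bases, exactly as the paper does, with the explicit computation carried out only in a sample case and the data of Table~\ref{tab:borcherds_products} supplied by the computer search.
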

\begin{corollary}
\label{cor:phig_modularity}
For
\begin{gather*}
  g
\in
  \big\{ 1\text{A}, 2\text{A}, 2\text{B}, 3\text{A}, 3\text{B},
         4\text{A}, 4\text{B}, 4\text{C}, 5\text{A},
         7\text{AB}, 8\text{A}, 11\text{A}, 23\text{AB}
  \big\}
\text{,}
\end{gather*}
the product expansions $\Phi_g$ define meromorphic Siegel modular forms of level~$N_g$ and weight~$k_g$.

In particular, Theorem~\ref{maintheorem} holds.
\end{corollary}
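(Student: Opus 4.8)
The plan is to invoke Proposition~\ref{prop:modularity-criterion} once for each conjugacy class~$g$, feeding it the data $\big(N_{g,i}, \phi_{g,i}, n_{g,i}\big)$ recorded in Table~\ref{tab:borcherds_products}.  For a fixed~$g$ there are exactly two things to verify.  The first is the linearized identity
\[
  \cE\big(\Phi_g^{p_g}\big)
  =
  \sum_i \cE\big( B_{N_{g,i}}[p_g\,\phi_{g,i}, n_{g,i}] \big)
\]
in the module~$E$, whose left-hand side is $p_g\cdot\cE(\Phi_g)$ by~\eqref{eq:cE-additivity}.  The second is the matching of the leading exponents $e_{q_1}$, $e_\zeta$, $e_{q_2}$.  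Granting both, Proposition~\ref{prop:modularity-criterion} gives $\Phi_g^{p_g} = \prod_i B_{N_{g,i}}[p_g\,\phi_{g,i}, n_{g,i}]$.  Each factor on the right is a meromorphic Siegel modular form with character for $\Gamma^{(2)}_0(n_{g,i} N_{g,i})$ by Theorem~\ref{thm:borcherds-products} and the rescaling remark following it; since $n_{g,i} N_{g,i} \isdiv N_g$ one has $\Gamma^{(2)}_0(N_g) \subseteq \Gamma^{(2)}_0(n_{g,i} N_{g,i})$, so the product is a meromorphic Siegel modular form of level~$N_g$, of weight equal to the sum of the weights of the factors; one checks this sum is $p_g k_g$ using the weight formula of Theorem~\ref{thm:borcherds-products} applied to $p_g\,\phi_{g,i}$.

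For the linearized identity I would compute the left-hand side from the closed expression~\eqref{eq:cheng-duncan-cE}, which gives $\cE(\Phi_g)(d)$ in terms of the M\"obius-twisted Taylor coefficients $\big(\chi(g^{d'}),\widetilde T_{g^{d'}}\big)$, with the $\widetilde T_g$ read off from Table~\ref{tab:chi-and-tdT} (and, in full, from~\cite{CD12}).  The right-hand side is~\eqref{eq:rescaled-borcherds-product-cE}: for each cusp $\frakc \in \cC(N_{g,i})$ one forms the projections $\pi_{\rm FE}$ of $tc_0(\phi_{g,i})$ and $tc_2(\phi_{g,i})$ onto integral Fourier exponents, scales by $h_\frakc(N_{g,i}) N_\frakc(N_{g,i})^{-1}$, and collects the cusps according to the value $N_\frakc(N_{g,i}) = d/n_{g,i}$.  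Under Isomorphism~\eqref{eq:jacobi-modular-forms-isomorphism} these projections are exactly the linear maps tabulated as the matrices $\Pi_{\rm FE}(k, N, \frakc)$ in Table~\ref{tab:projected-cusp-expansions}, so the identity reduces to a finite, coordinate-by-coordinate comparison against the echelon bases of $\rmM_0(N_g)$ and $\rmM_2(N_g)$, carried out by machine.  En route one checks the integrality hypothesis $h_\frakc(N) N_\frakc(N)^{-1} c(\phi_\frakc;\Delta)\in\ZZ$ for $\Delta\le 0$ that Theorem~\ref{thm:borcherds-products} requires; the exponent $p_g > 1$ is forced precisely when $\Phi_g$ is a genuine $p_g$-th root of a product of rescaled Borcherds products rather than such a product on the nose.

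The exponent matching is comparatively light: $\Phi_g$ has leading triple $(1,1,1)$ by~\eqref{eq:chengducan_productexpansion}, so $\Phi_g^{p_g}$ has triple $(p_g,p_g,p_g)$, while $e_\bullet\big(B_{N_{g,i}}[p_g\,\phi_{g,i}, n_{g,i}]\big) = n_{g,i}\,e_\bullet\big(B_{N_{g,i}}[p_g\,\phi_{g,i}]\big)$ by~\eqref{eq:rescaled-borcherds-product-e}, and $e_\bullet(B_N[\phi])$ is given by the explicit principal-part sums of Theorem~\ref{thm:borcherds-products}; this is three rational identities per~$g$.  The genuine obstacle is making $\cE(B_N[\phi,n])$ explicit at all, since~\eqref{eq:rescaled-borcherds-product-cE} requires the Fourier expansions of elliptic modular forms of level~$N$ at \emph{every} cusp.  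This is why the theorem is restricted to prime-power $N_g$: for squarefree level one may invoke Atkin--Lehner theory, but here $N_g$ can be $4$, $8$, $9$, or $16$, and one must handle the non-Atkin--Lehner (and irrational) cusps of $\rmM_2(N)$.  I would obtain these expansions, hence the matrices $\Pi_{\rm FE}(k, N, \frakc)$, with Sage~\cite{sage} and the algorithm of~\cite{Ra13}, corroborating them by the numerical tests of Appendix~\ref{appendix:projection-matrices}; locating admissible input data $\phi_{g,i}$ is a separate search, described in Appendix~\ref{appendix:how-we-found-the-data}.

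Finally, Corollary~\ref{cor:phig_modularity} is immediate for the classes with $p_g = 1$.  For $g \in \{3\text{B}, 4\text{A}, 4\text{C}, 8\text{A}\}$ one passes back from $\Phi_g^{p_g}$ to $\Phi_g$: the divisor of the modular form $\Phi_g^{p_g} = \prod_i B_{N_{g,i}}[p_g\,\phi_{g,i}, n_{g,i}]$ has all multiplicities divisible by $p_g$ (it is $p_g$ times the divisor of the product expansion $\Phi_g$), so a $p_g$-th root of it has trivial monodromy on the simply connected space $\HS_2$; the branch pinned down by the convergent product expansion of $\Phi_g$ near the cusp, where $\Phi_g$ is nonvanishing, therefore extends to a meromorphic Siegel modular form of level~$N_g$ and weight~$k_g$ with a $p_g$-fold refined character (half-integral weight $\tfrac12$ when $g = 8\text{A}$, as in Table~\ref{tab:Ng}).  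Since the list of~$g$ coincides with that of Theorem~\ref{maintheorem}, the main theorem follows.
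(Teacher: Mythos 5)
Your argument follows the paper's own route: Theorem~\ref{thm:phig_modularity} (established via Proposition~\ref{prop:modularity-criterion}, the data of Table~\ref{tab:borcherds_products}, the $\Pi_{\rm FE}$ matrices, and the exponent check) exhibits $\Phi_g^{p_g}$ as a product of rescaled Borcherds products of level dividing~$N_g$ and weight~$p_g k_g$, and the passage from $\Phi_g^{p_g}$ back to $\Phi_g$ by extracting a $p_g$\nbd th root, with the transformation law then holding up to $p_g$\nbd th roots of unity (a character for $g \ne 8\text{A}$, a multiplier system in the half-integral-weight case $8\text{A}$), is precisely the paper's concluding step. Your divisor/monodromy justification of the root extraction is more explicit than the paper's terse statement, but it is the same argument in substance.
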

\begin{proof}
Theorem~\ref{thm:phig_modularity} tells us that $\Phi_g^{p_g}$ is modular by representing it as a product of rescaled Borcherds products.  This implies modularity of $\Phi_g$ by the following argument.

Note that the product expansion $\Phi_g(Z)$ is convergent on some domain in $\HS_2$, since its formal power, $\Phi_g^{p_g}$, converges on some domain.  We shall have proved that
\begin{gather*}
  \Phi_g^{p_g}( (A Z + B) (C Z + D)^{-1} )
=
  \det(C Z + D)^{p_g k_g}
  \chi_g(\left(\begin{smallmatrix}A & B \\ C & D \end{smallmatrix}\right))^{p_g}\,
  \Phi_g^{p_g}(Z)
\end{gather*}
for some character $\chi_g$ of $\Gamma_0^{(2)}(N_g)$ and all $\gamma = \left(\begin{smallmatrix}A & B \\ C & D \end{smallmatrix}\right) \in \Gamma_0^{(2)}(N_g)$.  From this we find that
\begin{gather*}
  \Phi_g( (A Z + B) (C Z + D)^{-1} )
=
  \det(C Z + D)^{k_g}
  \epsilon_g(\gamma)
  \chi_g (\gamma)\,
  \Phi_g(Z)
\end{gather*}
for some $\epsilon_g(\gamma)$, which is a character if $g \ne 8\text{A}$, and a multiplier system otherwise.
\end{proof}

\begin{table}[b]
\begin{tabular}{r@{\hspace{3em}}llll}
\toprule
$g$ & $N_{g, i}$ & $n_{g, i}$ & ${\rm tc}_0(\phi_{g, i})$ & ${\rm tc}_2(\phi_{g, i})$ \\
\midrule
$1$\text{A}
    & $1$ & $1$ & $24$ & $0$ \\[3pt]
$2$\text{A}
    & $2$ & $1$ & $8$ & $\rmM(2)_2[ \frac{4}{3} ]$ \\[3pt]
$2$\text{B}
    & $1$ & $1$ & $-12$ & $0$ \\
    & $2$ & $1$ & $12$  & $0$ \\
    & $4$ & $1$ & $0$   & $\rmM_2(4)[ 2, -16 ]$ \\[3pt]
$3$\text{A}
    & $3$ & $1$ & $6$ & $\rmM(3)_2[ \frac{3}{2} ]$ \\[3pt]
$3$\text{B}
    & $1$ & $1$ & $-8$  & $0$ \\
    & $3$ & $1$ & $8$   & $\rmM_2(3)[ 2 ]$ \\[3pt]
4\text{A}
    & $1$ & $1$ & $-6$ & $0$ \\
    & $2$ & $1$ & $2$ & $\rmM(2)_2[ -\frac{2}{3} ]$ \\
    & $4$ & $1$ & $4$ & $\rmM(4)_2[ \frac{2}{3}, 16 ]$ \\
    & $8$ & $1$ & $0$ & $\rmM(8)_2[ 2, 0, -16 ]$ \\[3pt]
4\text{B}
    & $4$ & $1$ & $4$ & $\rmM(4)_2[ \frac{5}{3}, 8 ]$ \\[3pt]
4\text{C}
    & $1$ & $1$ & $-3$ & $0$ \\
    & $2$ & $1$ & $-3$ & $\rmM(2)_2[ \frac{1}{4} ]$ \\
    & $4$ & $1$ & $6$ & $\rmM(4)_2[ \frac{7}{4}, -14 ]$ \\
    & $4$ & $2$ & $0$ & $\rmM(4)_2[ 1, -8 ]$ \\[3pt]
5\text{A}
    & $5$ & $1$ & $4$ & $\rmM(5)_2[ \frac{5}{3} ]$ \\[3pt]
7\text{AB}
    & $7$ & $1$ & $3$ & $\rmM(7)_2[ \frac{7}{4} ]$ \\[3pt]
8\text{A}
    & $1$ & $1$ & $\frac{3}{2}$ & $0$ \\
    & $2$ & $1$ & $-\frac{5}{2}$ & $\rmM(2)_2[ \frac{1}{3} ]$ \\
    & $4$ & $1$ & $1$ & $\rmM(4)_2[ \frac{1}{6}, -12 ]$ \\
    & $8$ & $1$ & $2$ & $\rmM(8)_2[ \frac{4}{3}, 8, 0 ]$ \\
    & $8$ & $2$ & $0$ & $\rmM(8)_2[ 1, 0, -8 ]$ \\[3pt]
11\text{A}
    & $11$ & $1$ & $2$ & $\rmM(11)_2[ \frac{11}{6}, 0 ]$ \\[3pt]
23\text{AB}
    & $23$ & $1$ & $1$ & $\rmM(23)_2[ \frac{23}{12}, \frac{46}{11}, -\frac{23}{11} ]$ \\
\bottomrule
\\
\end{tabular}
\caption{We have $\Phi_g = \prod_i B_{N_{g, i}}[\phi_{g, i}, n_{g, i}]$.  A proof is given in Theorem~\ref{thm:phig_modularity}.}
\label{tab:borcherds_products}
\end{table}
\begin{proof}[Proof of Theorem~\ref{thm:phig_modularity}]
Recall the description of projected cusp expansions given in Section~\ref{ssec:preliminaries:elliptic-modular-forms} and the matrices $\Pi_{\rmF\rmE}(k, N;\, \frakc)$ ($\frakc \in \cC(N)$) given in Table~\ref{tab:projected-cusp-expansions}.  We identify elliptic modular forms ${\rm tc}_2(\phi_{g,i})$ with the coordinate (column) vector with respect to the echelon basis of $\rmM_{2}(\Gamma_0(N_{g, i}))$.  Since ${\rm tc}_0(\phi_{g, i})$ is a constant, its cusp expansions are the same at all $\frakc \in \cC(N_{g, i})$.

To apply Proposition~\ref{prop:modularity-criterion}, we first verify that all $p_g \phi_{g, i}$ satisfy the assumptions of Theorem~\ref{thm:borcherds-products}.  Using \eqref{eq:weak-phi-fourier-expansions} and Table~\ref{tab:projected-cusp-expansions}, it is straightforward to check that the Fourier coefficients $c(p_g \pi_{\rm FE}(\phi_{g, i})_\frakc; \Delta)$ are integral for all $\Delta < 0$.

Next, we compute
\begin{gather}
\label{eq:thm:phig_modularity:Phi_g_sum}
  p_g \cE( \Phi_g )
=
  \sum_i \cE\big( B_{N_{g, i}}[p_g \phi_{g, i}, n_{g, i}] \big)
\text{,}
\end{gather}
using the echelon basis of elliptic modular forms.  By~\eqref{eq:rescaled-borcherds-product-cE}, we have
\begin{gather*}
  \cE\big( B_{N_{g, i}}[p_g \phi_{g, i}, n_{g, i}] \big)(d)
=
  \sum_{\substack{ \frakc \in \cC(N_{g, i}) \\
                   N_\frakc(N_{g, i}) = d \slashdiv n_{g, i} }}
       \frac{h_\frakc(N_{g, i})}{N_\frakc(N_{g, i})}\, p_g
       \Big( {\rm tc}_0(\phi_{g, i}),\;
             \Pi_{\rmF\rmE}(2, N_{g, i}; \frakc) \, {\rm tc}_2(\phi_{g, i}) \Big)
\text{.}
\end{gather*}

Formulas for $e_{q_1}$, $e_{\zeta}$, and $e_{q_2}$ can be derived from Theorem~\ref{thm:borcherds-products} and \eqref{eq:rescaled-borcherds-product-e}.
\begin{gather*}
  e_{q_1}\big( B_{N_{g, i}}[p_g \phi_{g, i}, n_{g, i}] \big)
=
  \frac{n_{g, i}}{24} \sum_{\substack{\frakc \in \cC(N_{g, i}) \\ l \in \ZZ}} \!\!
               h_\frakc(N_{g, i})\, p_g c\big( \pi_{\rm FE}(\phi_\frakc); -l^2 \big)
\end{gather*}
We have $c\big( \pi_{\rm FE}(\phi_\frakc); -l^2 \big) = 0$, if $l \not \in \{0, \pm 1\}$, since $\phi$ is a weak Jacobi forms.  

Therefore,
\begin{alignat*}{2}
&
  e_{q_1}\big( B_{N_{g, i}}[ p_g \phi_{g, i}, n_{g, i}] \big)
\\
={}&
  \frac{n_{g, i}}{24} \sum_{\frakc \in \cC(N_{g, i})} \!\!
               h_\frakc(N_{g, i})
               \Big(
               && \quad p_g {\rm tc}_0(\phi_{g, i})
                  \cdot
                  \big( c( \frac{\phi_{0, 1}}{12};\, 0 ) 
                        + 2 c( \frac{\phi_{0, 1}}{12};\, -1 ) \big)
\\&
               && +
                  c\big( p_g \pi_{\rm FE}( {\rm tc}_2(\phi_{g, i})_\frakc );\, 0 \big)
                  \cdot
                  \big( c( \phi_{-2, 1};\, 0 )
                        + 2\, c( \phi_{-2, 1};\, -1 ) \big)
               \Big)
\text{,}
\end{alignat*}
which equals
\begin{gather*}
  \frac{n_{g, i}}{24} \sum_{\frakc \in \cC(N_{g, i})} \!\!
               h_\frakc(N_{g, i})
               p_g {\rm tc}_0(\phi_{g, i})
\text{.}
\end{gather*}
A similar computations gives
\begin{align*}
&  e_{\zeta}\big( B_{N_{g, i}}[p_g \phi_{g, i}, n_{g, i}] \big)
=
  e_{q_2}\big( B_{N_{g, i}}[p_g \phi_{g, i}, n_{g, i}] \big)
\\
={}&
  \frac{n_{g, i}}{2} \sum_{\frakc \in \cC(N_{g, i})} \!\!
              h_\frakc(N_{g, i})
              \big(
              \frac{1}{12} p_g {\rm tc}_0(\phi_{g, i})
              + 
              p_g c\big( \Pi_{\rm FE}(2, N_{g, i}; \frakc) {\rm tc}_2(\phi_{g, i}) ;\, 0 \big)
              \big)
\text{.}
\end{align*}
The $0$th coefficient of a modular form can be easily read of from its coordinates with respect to the echelon basis.  For this reasons, the above expressions can be directly evaluated using Table~\ref{tab:projected-cusp-expansions}.

The weight $p_g k_g$ of $\prod_i B_{N_{g, i}}[p_g \phi_{g, i}, n_{g, i}]$ can be computed along the same line.
By Theorem~\ref{thm:borcherds-products}, we have
\begin{gather*}
  p_g k_g
=
  \frac{1}{2} \sum_i \sum_{\frakc \in \cC(N_{g, i})}
                      \frac{h_\frakc(N_{g, i})}{N_\frakc(N_{g, i})}
                      \Big( p_g {\rm tc}_0(p_g\, \phi_{g, i}) 
                          + p_g c\big( \Pi_{\rmF\rmE}(2, N_{g, i}; \frakc) {\rm tc}_2(p_g\, \phi_{g, i}); 0\big) \Big)
\text{.}
\end{gather*}

We have reduced all computations to straight forward linear algebra.  We give details only in the case of $g = 3\text{B}$, and leave all other cases to the reader.  For $g = 3\text{B}$, we have
\begin{align*}
  \phi_{3\text{B}, 1}(\tau, z)
&=
  -8 \frac{\phi_{0, 1}(\tau, z)}{12}
&\text{with}\quad
  N_{3\text{B}, 1} = 1
\text{,}\,
  n_{3\text{B}, 1} = 1
\text{;}
\\
  \phi_{3\text{B}, 2}(\tau, z)
&=
  8 \frac{\phi_{0, 1}(\tau, z)}{12}
  + \rmM_2(3)[2](\tau) \phi_{-2, 1}(\tau, z)
&\text{with}\quad
  N_{3\text{B}, 1} = 3
\text{,}\,
  n_{3\text{B}, 1} = 1
\text{.}
\end{align*}
Since $N_{3\text{B}, i} \isdiv 3$ for $i \in \{1, 2\}$, we only need to consider forms of level $1$ and $3$.  The cusp data for $\Gamma_0(3)$ is:
\begin{gather*}
  h_\infty(3) = 1
\text{,}\;
  N_\infty(3) = 1
\text{;}\quad
  h_0(3) = 4
\text{,}\;
  N_0(3) = 3
\text{.}
\end{gather*}
By definition, $\Pi_{\rm FE}(2, N; \infty)$ is the identity for all $N$.  In addition, we need $\Pi_{\rm FE}(2, 3; 0) = (- \frac{1}{3})$.  We consider the third power of $\Phi_g$ corresponding to $p_{3\text{B}} = 3$.  Putting everything together, we obtain
\begin{align*}
  \cE\big( B_1[p_{3\text{B}}\, \phi_{3\text{B}, 1}, 1] \big) (1)
&=
  \frac{h_\infty(1)}{N_\infty(1)}
  \big( 3 \cdot (-8),\, 0 \big)
=
  (-24, 0)
\text{,}
\\
  \cE\big( B_3[p_{3\text{B}}\, \phi_{3\text{B}, 2}, 1] \big) (1)
&=
  \frac{h_\infty(3)}{N_\infty(3)}
  \big( 3 \cdot 8,\, 3 \cdot \rmM_2(3)[2] \big)
=
  \big( 24,\, \rmM_2(3)[6] \big)
\text{,}
\\
  \cE\big( B_3[p_{3\text{B}}\, \phi_{3\text{B}, 2}, 1] \big) (3)
&=
  \frac{h_\infty(3)}{N_\infty(3)}
  \big( 3 \cdot 8,\, 3 \cdot (-\tfrac{1}{3})\, \rmM_2(3)[2] \big)
=
  \big( 24, \rmM_2(3)[-2] \big)
\text{.}
\end{align*}

Comparing this with
\begin{gather*}
  p_{3\text{B}}\, \cE(\Phi_{3\text{B}})(1) = (0,\, \rmM_2(3)[6])
\text{,}\quad
  p_{3\text{B}}\, \cE(\Phi_{3\text{B}})(3) = (24,\, \rmM_2(3)[-2])
\text{,}
\end{gather*}
we establish
\begin{gather*}
  p_{3\text{B}}\, \cE(\Phi_{3\text{B}})
=
  \cE\big( B_1[p_{3\text{B}}\, \phi_{3\text{B}, 1}, 1] \big)
  + \cE\big( B_3[p_{3\text{B}}\, \phi_{3\text{B}, 2}, 1] \big)
\text{.}
\end{gather*}

Next, we find that 
\begin{align*}
  e_{q_1}\big( B_1[p_{3\text{B}}\, \phi_{3\text{B}, 1}, 1] \big)
&=
  \frac{1}{24} h_\infty(1) \cdot 3 \cdot (-8)
=
  -1
\text{,}
\\
  e_{q_1}\big(B_3[p_{3\text{B}}\, \phi_{3\text{B}, 2}, 1] \big)
&=
  \frac{1}{24} (h_\infty(3) + h_0(3)) \cdot 3 \cdot 8
=
  4
\text{,}
\end{align*}
and
\begin{align*}
  e_{\zeta}\big( B_1[p_{3\text{B}}\, \phi_{3\text{B}, 1}, 1] \big)
&=
  \frac{1}{2} h_\infty(1) \cdot 3 \cdot (-\frac{8}{12})
=
  -1
\text{,}
\\
  e_{\zeta}\big( B_3[p_{3\text{B}}\, \phi_{3\text{B}, 2}, 1] \big)
&=
  \frac{1}{2} \big( h_\infty(3) \big( 3 \cdot \frac{8}{12} + c\big( 3 \rmM_2(3)[2]; 0 \big) \big)
                    +
                    h_0(3) \big( 3 \cdot \frac{8}{12} + c\big( \Pi_{\rm FE}(2, 3; 0)\, 3 \rmM_2(3)[2];\, 0 \big) \big)
\\
&=
  \frac{1}{2} \big( ( 3 \cdot \frac{8}{12} + 6 )
                    +
                    3 ( 3 \cdot \frac{8}{12} + (-\frac{1}{3}) 6 ) \big)
=
  4
\text{,}
\end{align*}
while
\begin{gather*}
  3 e_{q_1}(\Phi_{3\text{B}})
=
  3 e_{\zeta}(\Phi_{3\text{B}})
=
  3 e_{q_2}(\Phi_{3\text{B}})
=
  3
\text{.}
\end{gather*}
This proves that $\Phi_{3\text{B}}^{p_{3\text{B}}}$ is modular.
\end{proof}

\section{Conclusion}
\label{sec:conclusion}

We have shown that $M_{24}$-twisted product expansions that arise form second quantized elliptic genera of $\text{K}3$ are modular, if the twisting element has non-composite order.  As opposed to results that were obtained so far, this modularity does not directly come from Borcherds products.  Instead, we have illustrated how products of rescaled Borcherds products enter the picture.  This is a new type of modularity that has not yet been observed in string theory.  It is, however, a natural generalization of the concept of eta products to Siegel modular forms.

The analog of the Weyl denominator formula for Kac-Moody algebras remains to be found.  Since its multiplicative part is more complicated than being just one Borcherds products, we speculate that the additive side might also be more complicated.  It is possible that it is the sum of rescaled additive lifts.  We have computed the weight of the twisted product expansions $\Phi_g$ in all cases that we could resolve.  In the cases $g = 11\text{A}$ and $g = 23\text{AB}$ the inequality $k_g \le 0$ implies that $\Phi_g$ is meromorphic.  Therefore, it will be necessary to take the regularized additive lift~\cite{Bo98} into consideration, while searching for the additive side of the attached Weyl denominator formula.  A physical interpretation for the divisors that occur would be of certain interest.

%Our result includes several genuine conjugacy classes of $M_{24}$, that is, conjugacy classes that do not have any representative in $M_{23}$.  Except for $8\text{A}$, these are exactly those $g$ for which $\Phi_g$ is not a Borcherds product.  This way, we provide further evidence that $M_{24}$ moonshine is indeed a phenomenon related to $M_{24}$, not only to $M_{23}$.

Finally, the composite levels~$N_g$ are of seemingly different nature.  The easiest case is $g = 6\text{A}$ in which case the predicted level is $N_{6\text{A}} = 6$.  We have tried -- without success -- to represent $\Phi_{6\text{A}}$ as a product of rescaled Borcherds products coming from weakly holomorphic Jacobi forms with pole order less than~$4$.  Clearly, novel ideas are needed to make progress in this direction.

\begin{appendix}

\section{How we found the data in Table~\ref{tab:borcherds_products}}
\label{appendix:how-we-found-the-data}

In order to express some power of $\Phi_g$ as products of rescaled Borcherds products, we solved the system of linear equations associated to the ansatz
\begin{gather}
\label{eq:ansatz-for-phi_g}
  \cE(\Phi_g)
=
  \sum_{\substack{n N \isdiv N'_g \\ N \le N_{\max}}} \cE(B_N(\phi_{N, n}, n))
\text{,}
\end{gather}
where $\phi_{N,n} \in \rmJ^!_{k, m}(\Gamma_0(N))$ with maximal pole order $0 \le o \in \ZZ$ at all cusps.  This is a finite dimensional system of linear equations, since $\Delta^o \phi_{N, n} \in \rmJ_{12 o, 1}^{(!)}(\Gamma_0(N))$, and $\dim\, \rmJ_{12 o, 1}^{(!)}(\Gamma_0(N)) < \infty$.  Note that for all cases that we have successfully solved, we have $N'_g = N_g$ and $o = 0$, while $N_{\max}$ equals the level of $\widetilde{T}_g$.

We used Sage~\cite{sage} (mind the version of Sage, which might be relevant to build and run our implementation) to build the matrix attached to~\eqref{eq:ansatz-for-phi_g} and solve it.  The code that we have used can be downloaded at~\cite{Rhomepage}.  We briefly describe how to invoke relevant functions in order to reprove our results.  At this point, we mention that, at~\cite{Rhomepage}, also the results of can be downloaded in machine readable form.

As a first step, download the code for this paper at~\cite{Rhomepage}.  Extract ``cheng\_products.tar.gz'', and run the following commands in terminal:
\begin{lstlisting}[basicstyle=\small\ttfamily]
cd eta_products
sage -sh
sage --python compile.py
exit
\end{lstlisting}

The basic solving procedure is invoked by {\tt cheng\_solve\_case}, whose declaration looks as follows.
\begin{lstlisting}[basicstyle=\small\ttfamily]
cheng_solve_case( g, conjectured_Ng = None, max_phi_level = None,
                  order = 0, minimize_scaling = False )
\end{lstlisting}
The first argument is the label of a conjugacy class that one wants to let compute.  The second argument, which is optional, is the conjectured level~$N'_g$ of $\Phi_g$.  If this parameter is not specified it will be replaced by the levels~$N_g$ conjectured in~\cite{CD12}.  The third parameter corresponds to $N_{\max}$.  Clearly, we must have $N_{\max} \isdiv N'_g$.  The default value in this case is the level of $\widetilde{T}_g$.  The next argument corresponds to $o$ in~\eqref{eq:ansatz-for-phi_g}.  The default value equals~$0$.  The last parameter determines whether or not the returned solution should involve as few rescaled Borcherds products as possible.

The next lines of code, compute the solution in the case $g = 11\text{A}$.
\begin{lstlisting}[basicstyle=\small\ttfamily]
sage: %runfile cheng_products.py
sage: cheng_solve_case('11A', minimize_scaling = True)
('11A',
 11,
 11,
 (0, 2, 11/6, 0, 0),
 [(1, 1, 0), (11, 1, 0), (11, 1, 1), (11, 1, 2), (1, 11, 0)])
sage: compute_borcherds_lift_data(_)                                   
(0, (1, 1, 1), {(11, 1, 0): (2, 0), (11, 1, Infinity): (-2, 2)})
\end{lstlisting}
The return value of the first command is $(g, N'_g, N_{\max}, v, l)$, where $v$ is a vector whose coordinates correspond to the elements of the list $l$ of triples $(N, n, i)$.  The first and second component of this triple are the same as in~\eqref{eq:ansatz-for-phi_g}.  The third component refers to the $(i + 1)$th basis vector in the echelon basis of $\rmM^{!(o)}_0(N)$, if $i$ is less than its dimension, and to the $(i + 1 - \dim\, \rmM^{!(o)}_0(N))$th echelon basis vector of $\rmM^{!(o)}_{2}(N)$, otherwise.  Here we write $\rmM^{!(o)}_k(N)$ for weakly holomorphic modular forms of weight~$k$ and level~$N$ whose pole order at all cusps is bounded by~$o$.  In the given example only $N = 11,\, n = 1$ contributes to the solution, and we have
\begin{gather*}
  \phi_{11, 1}
\cong
  \big(\rmM^{!(0)}_0(N)[2],\, \rmM^{!(0)}_2(N)[\tfrac{11}{6}]\big)
\end{gather*}
via Isomorphism~\eqref{eq:jacobi-modular-forms-isomorphism}.

To ensure that~\eqref{eq:ansatz-for-phi_g} corresponds to an equality of
\begin{gather*}
  \Phi_g^p
=
  \prod_{\substack{n N \isdiv N'_g \\ N \le N_{\max}}} \cE(B_N(p \phi_{N, n}, n))
\text{,}
\end{gather*}
the second command computes the weight $k$ of the right hand side, $(e_{q_1}, e_\zeta, e_{q_2})$, and a dictionary mapping $(N, n, \frakc)$ to the pair of  Fourier coefficients
\begin{gather*}
  \big( c((\phi_{N, n})_\frakc; 0),\, c((\phi_{N, n})_\frakc; -1) \big)
\text{.}
\end{gather*}
Note that this function is currently restricted to the case of $o = 0$, but it would be easy to implement a general form.

\section{Projection matrices $\Pi_{\rm FE}$}
\label{appendix:projection-matrices}

The matrices $\Pi_{\rm FE}(k, N, \frakc)$ given in Table~\ref{tab:projected-cusp-expansions} have been computed using methods in~\cite{Ra13}, which are algebraic.  We provide numerical double checks, to verify their correctness.  Follow the initial steps described in the previous section.  Install ``nose`` for Sage by typing
\begin{lstlisting}[basicstyle=\small\ttfamily]
sage -i nose
\end{lstlisting}
Then run the corresponding tests via
\begin{lstlisting}[basicstyle=\small\ttfamily]
sage -sh
nosetests -v modular_form_transformations__test.py
\end{lstlisting}
This will automatically perform all tests, which the reader can inspect by viewing the file ``modular\_form\_transformations\_test.py``.

\section{Tables}
\label{appendix:tables}

\begin{table}[H]
\begin{tabular}{l@{\hspace{2em}}llll}
\toprule
$g$ & $\chi(g)$ & ${\td T}_g$ \\
\midrule
$1\text{A}$ & $24$ & $0$ \\[2pt]
$2\text{A}$ & $8$  & $\rmM_2(2)[\frac{4}{3}]$ \\[2pt]
$2\text{B}$ & $0$  & $\rmM_2(4)[2, -16]$ \\[2pt]
$3\text{A}$ & $6$ & $\rmM_2(3)[\frac{3}{2}]$ \\[2pt]
$3\text{B}$ & $0$ & $\rmM_2(3)[2]$ \\[2pt]
$4\text{A}$ & $0$ & $\rmM_2(8)[2, 0, -16]$ \\[2pt]
$4\text{B}$ & $4$ & $\rmM_2(4)[\frac{5}{3}, 8]$ \\[2pt]
$4\text{C}$ & $0$ & $\rmM_2(4)[2, -8]$ \\[2pt]
$5\text{A}$ & $4$ & $\rmM_2(5)[\frac{5}{3}]$ \\[2pt]
$7\text{AB}$ & $3$ & $\rmM_2(7)[\frac{7}{4}]$ \\[2pt]
$8\text{A}$ & $2$ & $\rmM_2(8)[\frac{11}{6}, 4, 12]$ \\[2pt]
$11\text{A}$ & $2$ & $\rmM_2(11)[\frac{11}{6}, 0]$ \\[2pt]
$23\text{AB}$ & $1$ & $\rmM_2(23)[\frac{23}{12}, \frac{46}{11}, -\frac{23}{11}]$ \\
\bottomrule
\\
\end{tabular}
\caption{Cheng's and Duncan's initial data $\chi(g)$ and ${\td T}_g$, expressed in terms of the echelon bases.}
\label{tab:chi-and-tdT}
\end{table}

\begin{table}[h]
\begin{tabular}{l@{\hspace{2em}}lll@{\hspace{2em}}lll}
\toprule
$N$ & $\frakc$ & $h_\frakc$ & $N_\frakc$ & $\frakc$ & $h_\frakc$ & $N_\frakc$ \\
\midrule
$2$ &   $\infty$ & $1$ & $1$ &   $0$ & $2$ & $2$ \\[3pt]
$3$ &   $\infty$ & $1$ & $1$ &   $0$ & $3$ & $3$ \\[3pt]
$4$ &   $\infty$ & $1$ & $1$ &   $0$ & $4$ & $4$ \\
    &   $\frac{1}{2}$ & $1$ & $2$ \\[3pt]
$5$ &   $\infty$ & $1$ & $1$ &   $0$ & $5$ & $5$ \\[3pt]
$7$ &   $\infty$ & $1$ & $1$ &   $0$ & $7$ & $7$ \\[3pt]
$8$ &   $\infty$ & $1$ & $1$ &   $0$ & $8$ & $8$ \\
    &   $\frac{1}{2}$ & $2$ & $4$ &   $\frac{1}{4}$ & $1$ & $2$ \\[3pt]
$11$ &   $\infty$ & $1$ & $1$ &   $0$ & $11$ & $11$ \\[3pt]
$23$ &   $\infty$ & $1$ & $1$ &   $0$ & $23$ & $23$ \\
\bottomrule
\\
\end{tabular}
\caption{Cusp data for $\Gamma_0(N)$, used in Section~\ref{sec:extendedborcherdsproducts}.}
\label{tab:cups-data}
\end{table}
\clearpage

\begin{table}[h]
\begin{tabular}{l@{\hspace{2em}}llll}
\toprule
$g$ & $d$ & $\cE(\Phi_g)(d)$ & $d$ & $\cE(\Phi_g)(d)$ \\
\midrule
$1\text{A}$ &  $1$ & $\big( 24,\, 0 \big)$ \\[3pt]
$2\text{A}$ &  $1$ & $\big( 8,\, \rmM_2(2)[\frac{4}{3}] \big)$ &
               $2$ & $\big( 8,\, \rmM_2(2)[-\frac{2}{3}] \big)$ \\[3pt]
$2\text{B}$ &  $1$ & $\big( 0,\, \rmM_2(4)[2, -16] \big)$ &
               $2$ & $\big( 12,\, \rmM_2(4)[-1, 8] \big)$ \\[3pt]
$3\text{A}$ &  $1$ & $\big( 6,\, \rmM_2(3)[\frac{3}{2}] \big)$ &
               $3$ & $\big( 6,\, \rmM_2(3)[-\frac{1}{2}] \big)$ \\[3pt]
$3\text{B}$ &  $1$ & $\big( 0,\, \rmM_2(3)[2] \big)$ &
               $3$ & $\big( 8,\, \rmM_2(3)[-\frac{2}{3}] \big)$ \\[3pt]
$4\text{A}$ &  $1$ & $\big( 0,\, \rmM_2(8)[2, 0, -16] \big)$
            &  $2$ & $\big( 4,\, \rmM_2(8)[-\frac{1}{3}, 16, 24] \big)$ \\
            &  $4$ & $\big( 4,\, \rmM_2(8)[-\frac{1}{3}, -8, -8] \big)$ \\[3pt]
$4\text{B}$ &  $1$ & $\big( 4,\, \rmM_2(4)[\frac{5}{3}, 8] \big)$
            &  $2$ & $\big( 2,\, \rmM_2(4)[-\frac{1}{6}, 12] \big)$ \\
            &  $4$ & $\big( 4,\, \rmM_2(4)[-\frac{1}{3}, -8] \big)$ \\[3pt]
$4\text{C}$ &  $1$ & $\big( 0,\, \rmM_2(4)[2, -8] \big)$
            &  $2$ & $\big( 0,\, \rmM_2(4)[0, -4] \big)$ \\
            &  $4$ & $\big( 6,\, \rmM_2(4)[-\frac{1}{2}, 4] \big)$ \\[3pt]
$5\text{A}$ &  $1$ & $\big( 4,\, \rmM_2(5)[\frac{5}{3}] \big)$
            &  $5$ & $\big( 4,\, \rmM_2(5)[-\frac{1}{3}] \big)$ \\[3pt]
$7\text{AB}$ &  $1$ & $\big( 3,\, \rmM_2(7)[\frac{7}{4}] \big)$
            &  $7$ & $\big( 3,\, \rmM_2(7)[-\frac{1}{4}] \big)$ \\[3pt]
$8\text{A}$ &  $1$ & $\big( 2,\, \rmM_2(8)[\frac{11}{6}, 4, 12] \big)$
            &  $2$ & $\big( -1,\, \rmM_2(8)[\frac{1}{12}, -2, -14] \big)$ \\
            &  $4$ & $\big( 2,\, \rmM_2(8)[-\frac{1}{6}, 8, 12] \big)$
            &  $8$ & $\big( 2,\, \rmM_2(8)[-\frac{1}{6}, -4, -4] \big)$ \\[3pt]
$11\text{A}$ &  $1$ & $\big( 2,\, \rmM_2(11)[\frac{11}{6}, 0] \big)$
            &  $11$ & $\big( 2,\, \rmM_2(11)[-\frac{1}{6}, 0] \big)$ \\[3pt]
$23\text{AB}$ &  $1$ & $\big( 1,\, \rmM_2(23)[\frac{23}{12}, \frac{46}{11}, -\frac{23}{11}] \big)$
            &  $23$ & $\big( 1,\, \rmM_2(23)[-\frac{1}{12}, -\frac{2}{11}, \frac{1}{11}] \big)$ \\
\bottomrule
\\
\end{tabular}
\caption{Values of $\cE(\Phi_g)$ which are used in the course of the proof of Theorem~\ref{thm:phig_modularity}.}
\label{tab:Zg}
\end{table}

\begin{table}[h]
\begin{tabular}{ll@{\hspace{2em}}ll@{\hspace{2em}}ll}
\toprule
$k$ & $N$ & $\frakc$ & $\Pi_{\rm FE}(k, N, \frakc)$ & $\frakc$ & $\Pi_{\rm FE}(k, N, \frakc)$ \\
\midrule
$2$ & $2$ & $0$ & $\begin{pmatrix} -\frac{1}{2} \end{pmatrix}$ \\[4pt]
$2$ & $3$ & $0$ & $\begin{pmatrix} -\frac{1}{3} \end{pmatrix}$ \\[4pt]
$2$ & $4$ & $0$ & $\begin{pmatrix} -\frac{1}{8} & -\frac{1}{64} \\
                                  -3 & -\frac{3}{8} \end{pmatrix}$
          & $\frac{1}{2}$ & $\begin{pmatrix} -\frac{1}{2} & \frac{1}{16} \\
                                             12 & \frac{1}{2} \end{pmatrix}$ \\[4pt]
$2$ & $5$ & $0$ & $\begin{pmatrix} -\frac{1}{5} \end{pmatrix}$ \\[4pt]
$2$ & $7$ & $0$ & $\begin{pmatrix} -\frac{1}{7} \end{pmatrix}$ \\[4pt]
$2$ & $8$ & $0$ & $\begin{pmatrix} -\frac{1}{32} & -\frac{1}{64} & -\frac{1}{256} \\
                                   -\frac{3}{4} & -\frac{3}{8} & -\frac{3}{32} \\
                                   -\frac{3}{4} & -\frac{3}{8} & -\frac{3}{32} \end{pmatrix}$
          & $\frac{1}{2}$ & $\begin{pmatrix} -\frac{1}{8} & \frac{1}{16} & -\frac{1}{64} \\
                                             3 & \frac{1}{2} & \frac{3}{8} \\
                                             -3 & \frac{3}{2} & -\frac{3}{8} \end{pmatrix}$ \\[15pt]
&         & $\frac{1}{4}$ & $\begin{pmatrix} -\frac{1}{2} & 0 & \frac{1}{16} \\
                                             0 & 1 & 0 \\
                                             12 & 0 & \frac{1}{2} \end{pmatrix}$ \\
$2$ & $11$ & $0$ & $\begin{pmatrix} -\frac{1}{11} & 0 \\
                                    0 & -\frac{1}{11} \end{pmatrix}$ \\[4pt]
$2$ & $23$ & $0$ & $\begin{pmatrix} -\frac{1}{23} & 0 & 0 \\
                                    0 & -\frac{1}{23} & 0 \\
                                    0 & 0 & -\frac{1}{23} \end{pmatrix}$ \\[4pt]
\bottomrule
\end{tabular}
\caption{Matrices $\Pi_{\rm FE}(k, N, \frakc)$ associated to $\pi_{\rm FE}(\,\cdot\,_{\frakc}) : \rmM_k(N) \rightarrow \rmM_k(N)$; see Section~\ref{ssec:preliminaries:elliptic-modular-forms}.}
\label{tab:projected-cusp-expansions}
\end{table}

\end{appendix}

\bibliographystyle{amsalpha}
\bibliography{bibliography}

\newcommand{\etalchar}[1]{$^{#1}$}
\providecommand{\bysame}{\leavevmode\hbox to3em{\hrulefill}\thinspace}
\providecommand{\MR}{\relax\ifhmode\unskip\space\fi MR }
% \MRhref is called by the amsart/book/proc definition of \MR.
\providecommand{\MRhref}[2]{%
  \href{http://www.ams.org/mathscinet-getitem?mr=#1}{#2}
}
\providecommand{\href}[2]{#2}
\begin{thebibliography}{BvdGHZ08}

\bibitem[BF04]{BF04}
J.~Bruinier and J.~Funke, \emph{On two geometric theta lifts}, Duke Math. J.
  \textbf{125} (2004), no.~1, 45--90.

\bibitem[BO06]{BO06}
K.~Bringmann and K.~Ono, \emph{The {$f(q)$} mock theta function conjecture and
  partition ranks}, Invent. Math. \textbf{165} (2006), no.~2, 243--266.

\bibitem[Bor92]{Bo92}
R.~Borcherds, \emph{Monstrous moonshine and monstrous {L}ie superalgebras},
  Invent. Math. \textbf{109} (1992), no.~2, 405--444.

\bibitem[Bor98]{Bo98}
\bysame, \emph{Automorphic forms with singularities on {G}rassmannians},
  Invent. Math. \textbf{132} (1998), no.~3, 491--562.

\bibitem[BvdGHZ08]{BGHZ08}
J.~Bruinier, G.~van~der Geer, G.~Harder, and D.~Zagier, \emph{The 1-2-3 of
  modular forms}, Universitext, Springer-Verlag, Berlin, 2008, Lectures from
  the Summer School on Modular Forms and their Applications held in
  Nordfjordeid, June 2004, Edited by Kristian Ranestad.

\bibitem[CCN{\etalchar{+}}85]{Co85}
J.~Conway, R.~Curtis, S.~Norton, R.~Parker, and R.~Wilson, \emph{Atlas of
  finite groups}, Oxford University Press, Eynsham, 1985, Maximal subgroups and
  ordinary characters for simple groups, With computational assistance from J.
  G. Thackray.

\bibitem[CD12]{CD12}
M.~Cheng and J.~Duncan, \emph{The {L}argest {M}athieu {G}roup and ({M}ock)
  {A}utomorphic {F}orms}, arXiv: 1201.4140, 2012.

\bibitem[CG11]{CG08}
F.~Cl{\'e}ry and V.~Gritsenko, \emph{Siegel modular forms of genus 2 with the
  simplest divisor}, Proc. Lond. Math. Soc. (3) \textbf{102} (2011), no.~6,
  1024--1052.

\bibitem[Che10]{Che10}
M.~Cheng, \emph{{$K3$} surfaces, {$\cN=4$} dyons and the {M}athieu group
  {$M_{24}$}}, Commun. Number Theory Phys. \textbf{4} (2010), no.~4, 623--657.

\bibitem[CN79]{CN79}
J.~Conway and S.~Norton, \emph{Monstrous moonshine}, Bull. London Math. Soc.
  \textbf{11} (1979), no.~3, 308--339.

\bibitem[DMZ12]{DMZ12}
A.~Dabholkar, S.~Murthy, and D.~Zagier, \emph{Quantum {B}lack {H}oles, {W}all
  {C}rossing, and {M}ock {M}odular {F}orms}, arXiv: 1208.4074, 2012.

\bibitem[EH12]{EH12}
T.~Eguchi and K.~Hikami, \emph{Twisted elliptic genus for $\text{K}3$ and
  {B}orcherds products}, arXiv: 1112.5928, 2012.

\bibitem[EOT11]{EOT10}
T.~Eguchi, H.~Ooguri, and Y.~Tachikawa, \emph{{Notes on the $K3$ Surface and
  the Mathieu group $M_{24}$}}, Exper.Math. \textbf{20} (2011), 91--96.

\bibitem[EZ85]{EZ85}
M.~Eichler and D.~Zagier, \emph{The {T}heory of {J}acobi {F}orms},
  Birkh{\"a}user, Boston, 1985.

\bibitem[FLM88]{FLM88}
I.~Frenkel, J.~Lepowsky, and A.~Meurman, \emph{Vertex operator algebras and the
  monster}, Pure and Applied Math., vol. 134, Academic Press, Boston, 1988.

\bibitem[Fre83]{Fr83}
E.~Freitag, \emph{Siegelsche {M}odulfunktionen}, Grundlehren der mathematischen
  Wissenschaften, Springer Verlag, Berlin, 1983.

\bibitem[Gan12]{Ga12}
T.~Gannon, \emph{Much ado about {M}athieu}, arXiv: 1211.5531, 2012.

\bibitem[GV12]{GV12}
M.~Gaberdiel and R.~Volpato, \emph{{M}athieu {M}oonshine and {O}rbifold
  {$K3$}s}, arXiv: 1206.5143, 2012.

\bibitem[Igu67]{Ig67}
J.~Igusa, \emph{Modular forms and projective invariants}, Amer. J. Math.
  \textbf{89} (1967), 817--855.

\bibitem[LL04]{LL04}
J.~Lepowsky and H.~Li, \emph{Introduction to vertex operator algebras and their
  representations}, Progress in Mathematics, vol. 227, Birkh\"auser Boston
  Inc., Boston, MA, 2004.

\bibitem[Man10]{Ma10}
J.~Manschot, \emph{Stability and duality in {$\cN=2$} supergravity}, Comm.
  Math. Phys. \textbf{299} (2010), no.~3, 651--676.

\bibitem[Ono09]{On09}
K.~Ono, \emph{Unearthing the visions of a master: harmonic {M}aass forms and
  number theory}, Current developments in mathematics, 2008, Int. Press,
  Somerville, MA, 2009, pp.~347--454.

\bibitem[Rau]{Rhomepage}
M.~Raum, \emph{Homepage}, \url{http://www.raum-brothers.eu/martin/}.

\bibitem[Rau13]{Ra13}
\bysame, \emph{{F}ourier {E}xpansions at all {C}usps of {M}odular {F}orms of
  {S}quarefull {L}evel}, 2013, in preparation.

\bibitem[S{\etalchar{+}}13]{sage}
W.~Stein et~al., \emph{{S}age {M}athematics {S}oftware ({V}ersion 5.7)}, The
  Sage~Development Team, 2013, \url{http://www.sagemath.org}.

\bibitem[Zag07]{Za06}
D.~Zagier, \emph{Ramanujan's mock theta functions and their applications
  (d'apr\`es {Z}wegers and {B}ringmann-{O}no)}, 2006-2007, S\'{e}minaire
  {B}ourbaki, 60\`eme ann\'{e}e, no. 986.

\bibitem[Zwe02]{Zw02}
S.~Zwegers, \emph{Mock theta functions}, Ph.D. thesis, Universiteit Utrecht,
  2002.

\end{thebibliography}

\end{document}